\documentclass[11pt]{amsart}
\usepackage{mathrsfs}
\usepackage{amsfonts}
\usepackage{amsmath}
\usepackage{amssymb,latexsym}
\usepackage{graphicx}
%s\usepackage{amsmath}
\usepackage{amsthm}
\usepackage{hyperref}
\usepackage{indentfirst}
\usepackage{CJK}
\usepackage{epsfig,graphicx,picins,picinpar,subfigure}
\usepackage{pstricks}
\usepackage{color,soul}
\usepackage{fancyvrb}
\usepackage[numbers,sort&compress]{natbib}

\def\be{\begin{equation}\begin{cases}\begin{aligned}}
\def\ee{\end{aligned}\end{cases}\end{equation}}
\def\ba{\begin{equation*}\begin{aligned}}
\def\ea{\end{aligned}\end{equation*}}
\def\bs{\begin{equation}\begin{aligned}}
\def\es{\end{aligned}\end{equation}}
\def\bp{\begin{prop}}
\def\ep{\end{prop}}

%使用该符号时，公式末的标点符号放在\s 之内方能达到正常效果。

\setlength{\evensidemargin}{+0.30in}
\setlength{\evensidemargin}{+0.00in} \setlength{\oddsidemargin}
{+0.30in} \setlength{\oddsidemargin} {+0.00in}
\setlength{\textwidth}     {+6.50in} \setlength{\topmargin}
{-0.50in} \setlength{\topmargin}     {+0.00in}
\setlength{\textheight}    {+8.50in}
\parskip=3pt
\normalsize \makeatletter
\usepackage{xcolor}

\newtheorem{theorem}{Theorem}[section]

\newtheorem{prop}[theorem]{Proposition}

\theoremstyle{remark}

\begin{document}

\title[Gradient estimate of the solutions to Hessian equations]
{Gradient estimate of the solutions to Hessian equations with oblique boundary value}

\keywords{Oblique derivative boundary value, prescribed contact angle boundary value, gradient estimate, Hessian equations   }

\begin{abstract}
In this paper, we study  Hessian equations with prescribed contact angle boundary value or oblique derivative boundary value and finally derive the a priori global gradient estimate for the admissible solutions.
\end{abstract}

\author{PeiHe Wang}
\address{School of Mathematical Sciences,
 Qufu Normal University,
      Qufu, 273165, Shandong Province, China}
\email{peihewang@hotmail.com}

\date{}
\maketitle

\section{Introduction}

In this paper, we consider the following Hessian equation with oblique boundary value,
\begin{equation}\label{summary}
   \begin{split}
   \left\{\aligned
&\sigma_{k}(u_{ij})=f(x,u)  &\mathrm{\mbox {in}} & \ \Omega,\\
&G(x,\, D u)=0 &\mathrm{\mbox {on}}& \   \partial\Omega,\\
\endaligned\right.
   \end{split}
\end{equation}
where $\Omega$ is a bounded domain in $\mathrm{R}^n$ with smooth boundary, $f(x,\,t)$ and $G(x,\, \overrightarrow{p})$ are smooth functions defined respectively on $\Omega\times \mathbf{R}$ and $\overline{\Omega}\times \mathbf{R}^n$. We mainly study two general but important cases of $G(x,\, Du)$, one is the prescribed contact angle boundary value problem and the other is the oblique derivative boundary value problem. The topic in this paper is also concentrated on the global gradient estimate which would be one step forward to conclude the existence of the solution to the problem \eqref{summary}.

Hessian equations including Laplace equations and Monge-Amp$\mathrm{\grave{e}}$re equations as their special cases with various boundary values are in no doubt an interesting subject in recent years, many topics in differential geometry, convex geometry and optimal transport etc have close relations with this kind of elliptic equations. For the given boundary value, one may firstly be interested in the existence of the solution. In general, it is necessary to get the $C^{2,\alpha}$ estimate to conclude the existence of the solution. For instance, when the boundary value is of the Dirichlet type, one can refer to \cite{XJW1}, \cite{CW} and \cite{CNSIII} for the existence results. For the Neumann boundary value, Trudinger \cite{Trudinger} considered the special domain case and got the existence result. Also, he conjectured in \cite{Trudinger} that one can solve the problem in sufficiently smooth uniformly convex domains. Recently, Ma-Qiu \cite{MQ} gave a positive answer to this problem and solved the the Neumann problem of $k-$Hessian equations in uniformly convex domains. Chen and Zhang \cite{CZh} considered the Hessian quotient equation and also derived the existence results with Neumann boundary condition.

Now, it is of natural interest to consider the existence of the solutions to Hessian equations with the other types of boundary value problems such as prescribed contact angle boundary value and oblique derivative boundary value. It seems to be a little more complicated for these kinds of boundary values. For instance, a necessary condition for the existence of the solution to Monge-Amp$\mathrm{\grave{e}}$re equations was exhibited in \cite{MaPro} and \cite{JUrbas1}. Till now, there are only a few progress results on this topic. In \cite{XJW2}, the oblique derivative boundary problems for Monge-Amp$\mathrm{\grave{e}}$re equations were considered and the existence of the solutions to two dimension Monge-Amp$\mathrm{\grave{e}}$re equations was derived, and the generalized solutions for general dimension Monge-Amp$\mathrm{\grave{e}}$re equations were also considered. In \cite{JUrbas2}, \cite{JUrbas3} and \cite{JUrbas4}, Urbas also derived some existence results for Monge-Amp$\mathrm{\grave{e}}$re equations with oblique  derivative boundary value. For some augmented Hessian equations with oblique boundary value, Jiang and Trudinger in \cite{JT1} and \cite{JT2} considered the existence result. Wang \cite{XJW} derived the interior gradient estimate of the solutions to $k-$curvature equations and Deng and Ma \cite{DM} got the global gradient estimate for $k-$curvature equations with prescribed contact angle boundary value. It is still open for the existence of the solutions to $k-$curvature equations and Hessian equations with prescribed contact angle or oblique derivative boundary value. In this paper, we make an attempt for this problem and finally will derive the global gradient estimate for admissible solutions to Hessian equations with these kinds of boundary conditions which would be considered as a little step forward to the existence of the solutions to these interesting problems.

Gradient estimate of the solutions to various partial differential equations is an important and interesting issue in the study of P.D.E.  Usually, it includes interior gradient estimate and global gradient estimate which respectively have close relation to Liouville type results and the existence of the solution to P.D.E. One can refer to \cite{XJW1}, \cite{CNSIII}, \cite{XJW}, \cite{XJW2}, \cite{CZh}, \cite{chen},  \cite{CXZ}, \cite{Lieb},  \cite{MWW},   \cite{MQX},  \cite{WZ}, \cite{DM} etc and the references therein for more details.

The rest of the paper is organized as follows. In section 2, we introduce some notations and preliminaries for the proceed of the paper. In Section 3, we give the global gradient estimate of the solution for Hessian equations with prescribed contact angle boundary value, and in Section 4 we come to deal with the oblique derivative boundary data case.

\section{Notations and Preliminaries}

 In this section, we list some notations and and preliminaries which are necessary for the gradient estimate.

Firstly, we denoted by $d(x)=\mathrm{dist}(x,\, \partial \Omega)$ the distance from $x$ to $\partial \Omega$, the boundary of a bounded smooth domain $\Omega$. As a known fact, $d(x)$ is also smooth near the boundary, such as on the annular domain $\Omega_{\mu_1}=\{x\in \Omega\,|\, d(x)\le \mu_1 \}$,  where $\mu_1$ is a positive constant related to the domain.

 Secondly,  we give some basic properties of elementary symmetric functions, denoted by $\sigma_k(\lambda)$ for $\lambda\in \mathbf{R}^n$, which could be found in \cite{XJW1} and \cite{CNSIII}.

We denoted by $\sigma_k(\lambda|i)$ the $k-$th symmetric function with $\lambda_i=0$ and $\sigma_k(\lambda|ij)$ the $k-$th symmetric function with $\lambda_i=\lambda_j=0.$  Then we have the following propositions.

\begin{prop}\label{symmetric function}
Assume $\lambda=(\lambda_1,\,\lambda_2,\cdots,\lambda_n)\in \mathbf{R}^n$, and $k=1,\,2,\cdots,n,$ then we have
\begin{equation}\label{symmetric}
   \begin{split}
   &\sigma_k(\lambda)=\sigma_k(\lambda|i)+\lambda_i\sigma_{k-1}(\lambda|i),\ \  1\le i\ \le n,\\
   & \sum_{i=1}^n \lambda_i\sigma_{k-1}(\lambda|i)=k\sigma_k(\lambda),\\
   &  \sum_{i=1}^n \sigma_{k}(\lambda|i)=(n-k)\sigma_k(\lambda).
   \end{split}
\end{equation}

\end{prop}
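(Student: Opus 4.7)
The plan is to prove the three identities purely from the combinatorial definition $\sigma_k(\lambda)=\sum_{1\le i_1<\cdots<i_k\le n}\lambda_{i_1}\cdots\lambda_{i_k}$, viewing $\sigma_k(\lambda|i)$ and $\sigma_k(\lambda|ij)$ as the same sums but restricted to $k$-subsets that avoid the forbidden indices. All three assertions then reduce to elementary counting, so I expect no real obstacle; the only care needed is that the identities should also make sense at the endpoints ($k=1$ and $k=n$, where one may have to interpret $\sigma_{k-1}(\lambda|i)$ or $\sigma_k(\lambda|i)$ as $1$ or $0$ in the obvious way).

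For the first identity, I would partition the collection of $k$-subsets $S\subset\{1,\dots,n\}$ according to whether $i\in S$ or $i\notin S$. The subsets with $i\notin S$ contribute exactly $\sigma_k(\lambda|i)$. For the subsets with $i\in S$, factor out $\lambda_i$ and note that what remains is a $(k-1)$-subset of $\{1,\dots,n\}\setminus\{i\}$, which gives the contribution $\lambda_i\sigma_{k-1}(\lambda|i)$. Adding the two parts yields $\sigma_k(\lambda)=\sigma_k(\lambda|i)+\lambda_i\sigma_{k-1}(\lambda|i)$.

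For the second identity, I would expand $\sum_{i=1}^n \lambda_i\sigma_{k-1}(\lambda|i)$ as a double sum over pairs $(i,S)$ with $S$ a $(k-1)$-subset avoiding $i$. Swapping the order of summation, each $k$-subset $T=\{i\}\cup S$ appears exactly $k$ times, once for each choice of the "distinguished" index $i\in T$. Hence the double sum equals $k\sigma_k(\lambda)$. A conceptually identical alternative is to apply the identity $\frac{\partial}{\partial\lambda_i}\sigma_k(\lambda)=\sigma_{k-1}(\lambda|i)$ together with Euler's relation for the homogeneous polynomial $\sigma_k$.

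For the third identity, the cleanest route is to sum the first identity over $i=1,\dots,n$, producing
\[
n\sigma_k(\lambda)=\sum_{i=1}^n\sigma_k(\lambda|i)+\sum_{i=1}^n\lambda_i\sigma_{k-1}(\lambda|i),
\]
and then substitute the second identity on the right-hand side to obtain $\sum_{i=1}^n\sigma_k(\lambda|i)=(n-k)\sigma_k(\lambda)$. Equivalently, one can count directly: each $k$-subset $S$ contributes $\prod_{j\in S}\lambda_j$ to $\sigma_k(\lambda|i)$ for precisely the $n-k$ indices $i\notin S$. Either route completes the proposition.
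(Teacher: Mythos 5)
Your argument is correct, and all three identities follow cleanly from the subset-counting you describe; the reduction of the third identity to the first two via summing over $i$ is a tidy way to close the circle, and your alternative via $\partial\sigma_k/\partial\lambda_i=\sigma_{k-1}(\lambda|i)$ plus Euler's homogeneity relation is equally valid. Note, however, that the paper does not actually prove this proposition: it simply records these identities as standard facts and cites \cite{XJW1} and \cite{CNSIII}, so there is no proof in the paper to compare against. Your combinatorial argument is the standard textbook derivation one finds in those references, so it is fully consistent with what the paper relies on.
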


Recall that the Garding's cone is defined as
$$\Gamma_k=\{\lambda\in  \mathbf{R}^n \,|\, \sigma_i >0,\ \forall 1\le i\le k \}.$$

\begin{prop}\label{Elliptic}
Assume $k\in \{1,\,2,\cdots,n\}$ and $\lambda\in \Gamma_k$, suppose that
$$\lambda_1\ge\cdots\ge  \lambda_k\ge \cdots \ge \lambda_n, $$
then we have
$$\sigma_{k-1}(\lambda|n)\ge \cdots\ge \sigma_{k-1}(\lambda|k)\ge \cdots \ge \sigma_{k-1}(\lambda|1)>0$$
and
\begin{equation}\label{Fkk-F}
   \begin{split}
  \sigma_{k-1}(\lambda|k)\ge C(n,\,k) \sum_{i=1}^n \sigma_{k-1}(\lambda|i)\,.
   \end{split}
\end{equation}
\end{prop}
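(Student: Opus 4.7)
The plan is to treat the three assertions in order, using the identities of Proposition \ref{symmetric function} together with the standard ellipticity property of $\Gamma_k$: if $\lambda\in\Gamma_k$, then for each index $i$ the reduced vector $(\lambda|i)$ lies in $\Gamma_{k-1}$, so $\sigma_{l-1}(\lambda|i)>0$ for every $l\le k$. This ellipticity is itself proved by induction on $k$ from Proposition \ref{symmetric function}.

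For the monotonicity, I would expand $\sigma_{k-1}(\lambda|i)$ around the index $j$ and $\sigma_{k-1}(\lambda|j)$ around the index $i$ via Proposition \ref{symmetric function}; subtracting yields
\[
\sigma_{k-1}(\lambda|i)-\sigma_{k-1}(\lambda|j)=(\lambda_j-\lambda_i)\,\sigma_{k-2}(\lambda|ij).
\]
For $i<j$ the first factor is $\le 0$, while $\sigma_{k-2}(\lambda|ij)=\sigma_{k-2}((\lambda|i)|j)\ge 0$ is precisely the ellipticity applied to $(\lambda|i)\in\Gamma_{k-1}$. The chain $\sigma_{k-1}(\lambda|n)\ge\cdots\ge\sigma_{k-1}(\lambda|1)$ follows, and the strict positivity $\sigma_{k-1}(\lambda|1)>0$ is the ellipticity of $\sigma_k$ at $\lambda$ itself.

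For the comparison estimate, I would first use the third identity of Proposition \ref{symmetric function} to rewrite $\sum_i\sigma_{k-1}(\lambda|i)=(n-k+1)\sigma_{k-1}(\lambda)$, reducing the target to $\sigma_{k-1}(\lambda|k)\ge c(n,k)\sigma_{k-1}(\lambda)$. The monotonicity already gives $\sigma_{k-1}(\lambda|i)\le\sigma_{k-1}(\lambda|k)$ for $i\le k$, so the small-index half of the sum is controlled. The large-index terms are handled via
\[
\sigma_{k-1}(\lambda|i)=\sigma_{k-1}(\lambda|k)+(\lambda_k-\lambda_i)\,\sigma_{k-2}(\lambda|ki),
\]
and the problem reduces to bounding $(\lambda_k-\lambda_i)\sigma_{k-2}(\lambda|ki)$ by a constant multiple of $\sigma_{k-1}(\lambda|k)$.

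I expect this final step to be the main obstacle: the factor $\lambda_k-\lambda_i\ge 0$ is unbounded a priori, so the bound must come from the full strength of $\lambda\in\Gamma_k$. The natural tool is a Newton-Maclaurin inequality on $\Gamma_k$, such as $\sigma_{k-2}(\lambda)\,\sigma_k(\lambda)\le c_{n,k}\,\sigma_{k-1}(\lambda)^2$, combined with the positivity $\sigma_k(\lambda)>0$; together these pin the size of $\lambda_k$ against the remaining $\sigma$-values and deliver the required dimensional constant.
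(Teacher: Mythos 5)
The paper does not prove this proposition at all; it is simply cited to \cite{XJW1} and \cite{CNSIII}, so there is no in-paper argument against which to compare. Your first two assertions are handled correctly: the telescoping identity
\[
\sigma_{k-1}(\lambda|i)-\sigma_{k-1}(\lambda|j)=(\lambda_j-\lambda_i)\,\sigma_{k-2}(\lambda|ij)
\]
follows from two applications of the first identity in Proposition~\ref{symmetric function}, the sign of $\sigma_{k-2}(\lambda|ij)$ comes from $(\lambda|ij)\in\Gamma_{k-2}$, and the strict positivity of $\sigma_{k-1}(\lambda|1)$ is the ellipticity of $\sigma_k$ on $\Gamma_k$. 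The reduction of the comparison estimate to $\sigma_{k-1}(\lambda|k)\ge c(n,k)\,\sigma_{k-1}(\lambda)$ via the third identity of Proposition~\ref{symmetric function} is also correct.

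The genuine gap is exactly where you flag it, and the tool you suggest is not the right one. Bounding $(\lambda_k-\lambda_i)\,\sigma_{k-2}(\lambda|ki)$ for $i>k$ does not come from a Newton--Maclaurin inequality for the full $\lambda$; the quantity $\sigma_{k-2}(\lambda|ki)$ is a derivative of a \emph{reduced} vector and the factor $\lambda_k-\lambda_i$ is an eigenvalue gap, neither of which is directly controlled by $\sigma_{k-2}(\lambda)\sigma_k(\lambda)\le c\,\sigma_{k-1}(\lambda)^2$. The missing ingredient is the positivity of partial sums: iterating $(\lambda|i)\in\Gamma_{k-1}$ gives $(\lambda|1\cdots(k-1))\in\Gamma_1$, i.e.\ $\lambda_k+\lambda_{k+1}+\cdots+\lambda_n>0$. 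Since $\lambda_k\ge\cdots\ge\lambda_n$, this forces $\lambda_k>0$, and also yields $-\lambda_n<(n-k)\lambda_k$, hence $0\le\lambda_k-\lambda_i\le(n-k+1)\lambda_k$ for all $i>k$. This eigenvalue bound, not a Newton--Maclaurin inequality, is what tames the "unbounded a priori" factor; one then still needs to relate $\lambda_k\,\sigma_{k-2}(\lambda|ki)$ to $\sigma_{k-1}(\lambda|k)$, which requires a further argument that you have not supplied. As written, the proof of inequality \eqref{Fkk-F} is incomplete.
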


Remark that if the eigenvalues of $(u_{ij})$, denoted also by $(\lambda_1,\,\lambda_2,\cdots,\lambda_n)$, are located in $\Gamma_k$, then the equation in \eqref{summary} is elliptic and we will call this kind of solution as ``$k-$admissible" solution.

We also list the generalized Newton-MacLaurin inequality in the following which includes the Newton inequality and the MacLaurin inequality as the special cases.
\begin{prop}\label{NewtonInequality}
Assume $\lambda\in \Gamma_k$, and $k,\,l,\,r,\,s\in \{0,\,1,\,2,\cdots,n\}$ with $k> l\ge0,\ r>s\ge0$, $k\ge r,\ l\ge s$ and $k-l\ge r-s$, we have
\begin{equation}\label{Newtoninequality}
   \begin{split}
   \left[ \frac{\frac{\sigma_k(\lambda)}{C^k_n}}{\frac{\sigma_l(\lambda)}{C^l_n}}\right]^{\frac{1}{k-l}}\le \left[ \frac{\frac{\sigma_r(\lambda)}{C^r_n}}{\frac{\sigma_s(\lambda)}{C^s_n}}\right]^{\frac{1}{r-s}},
   \end{split}
\end{equation}
and the equality holds if and only if $\lambda_1=\lambda_2=\cdots=\lambda_n>0.$
\end{prop}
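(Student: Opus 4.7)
The plan is to reduce the stated inequality to the discrete concavity of a single one-variable sequence. First I would set $p_j=\sigma_j(\lambda)/\binom{n}{j}$ for $0\le j\le k$; since $\lambda\in\Gamma_k$ each $p_j$ is strictly positive, so I can take logarithms. Extracting the $(k-l)$-th and $(r-s)$-th roots and passing to $\log$, the inequality to prove becomes
\[
\frac{\log p_k-\log p_l}{k-l}\le\frac{\log p_r-\log p_s}{r-s},
\]
i.e., the secant slope of $j\mapsto\log p_j$ on the interval $[l,k]$ does not exceed the secant slope on $[s,r]$. In view of the hypotheses $s\le l$ and $r\le k$, this follows at once once $j\mapsto\log p_j$ is known to be concave on $\{0,1,\dots,k\}$, which is equivalent to Newton's inequality $p_{j-1}^2\ge p_j p_{j-2}$ for each $2\le j\le k$.

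The main step, and where I expect the real work, is proving Newton's inequality. I would use the generating polynomial $f(t)=\prod_{i=1}^n(t+\lambda_i)=\sum_{j=0}^n\binom{n}{j}p_j t^{n-j}$, which is real-rooted because each $\lambda_i\in\mathbb{R}$. By Rolle's theorem every derivative of $f$ is again real-rooted; a direct computation shows that, up to the factor $n!/(j+1)!$, the polynomial $f^{(n-j-1)}$ equals $\sum_{i=0}^{j+1}\binom{j+1}{i}p_i(\lambda)t^{j+1-i}$, so the normalized coefficients of $f^{(n-j-1)}$ are exactly $p_0,\dots,p_{j+1}$. Applying the standard Rolle/discriminant argument (further differentiating down to a quadratic whose real-rootedness forces its discriminant to be nonnegative), together with the reciprocal-polynomial trick $t^{j+1}f^{(n-j-1)}(1/t)$ to handle the top three coefficients, one obtains Newton's inequality $p_j^2\ge p_{j-1}p_{j+1}$ at every admissible index.

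With concavity of $\log p_j$ in hand, the generalized inequality reduces to a short chord-slope chain:
\[
\frac{\log p_r-\log p_s}{r-s}\ge\frac{\log p_k-\log p_s}{k-s}\ge\frac{\log p_k-\log p_l}{k-l},
\]
where the first step moves the right endpoint from $r$ up to $k$ with the left endpoint fixed at $s$, and the second moves the left endpoint from $s$ up to $l$ with the right endpoint fixed at $k$; each step is an application of concavity. Exponentiating recovers the inequality.

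For the equality case, tracking the chain shows that equality in the final inequality forces $p_{j-1}^2=p_j p_{j-2}$ at each relevant $j$. Equality in the Rolle/discriminant argument forces every reduced quadratic to have a double root, which by the inductive structure forces all roots $-\lambda_i$ of $f$ to coincide; combined with $\sigma_1(\lambda)>0$ from $\lambda\in\Gamma_k\subset\Gamma_1$, the common value must be positive, giving $\lambda_1=\lambda_2=\cdots=\lambda_n>0$.
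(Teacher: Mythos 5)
The paper does not actually prove this proposition; it is stated as a known fact with references to \cite{XJW1} and \cite{CNSIII}, so there is no in-paper argument to compare against. Your self-contained proof is the standard one, and it is essentially correct: reduce the general MacLaurin-type inequality to log-concavity of $j\mapsto p_j=\sigma_j(\lambda)/\binom{n}{j}$ on $\{0,\dots,k\}$ (legitimate since $\lambda\in\Gamma_k$ makes $p_0,\dots,p_k>0$), and prove log-concavity via Newton's inequalities $p_j^2\ge p_{j-1}p_{j+1}$ obtained from the real-rootedness of $\prod(t+\lambda_i)$, Rolle's theorem, and the reciprocal-polynomial reduction to a quadratic discriminant. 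Your chord-slope chain $S(s,r)\ge S(s,k)\ge S(l,k)$ is correct and in fact only uses $r\le k$ and $s\le l$; the extra hypothesis $k-l\ge r-s$ in the statement is not needed for this argument. Two small points you should tighten. First, in the equality analysis, equality in your two-step chain forces the map $j\mapsto\log p_j$ to be affine only on $\{s,s+1,\dots,k\}$, not on all of $\{0,\dots,n\}$; you then need the sharper form of Newton's equality case — equality $p_j^2=p_{j-1}p_{j+1}$ at even a single index $j$ with $1\le j\le n-1$ already forces all $\lambda_i$ equal — which follows from the lemma that a multiple root of a derivative of a real-rooted polynomial must already be a (higher-multiplicity) root of the polynomial itself; you gesture at this but should state the lemma. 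Second, positivity of the common value follows from $\sigma_1>0$ (since $\Gamma_k\subset\Gamma_1$), as you say, but note that this uses $k\ge1$, which is guaranteed by $k>l\ge0$.
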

As the last point of this section, we also state that the universal constant $C$ during the whole paper may change from line to line.

\section{Prescribed contact angle boundary data}

In this section, we set out to get the gradient estimate of the admissible solution to Hessian equations with prescribed contact angle boundary value. In a word, we will prove the following theorem.

\begin{theorem}\label{PCAB}
Let $\Omega$ be a smooth bounded domain in $\mathbf{R}^n(n\ge 2)$ and $u$ be the admissible solution to the following Hessian equations with prescribed contact angle boundary value,
\begin{equation}\label{Pangle}
   \begin{split}
   \left\{\aligned
&\sigma_{k}(u_{ij})=f(x,u)  &\mathrm{\mbox {in}} & \ \Omega,\\
&\frac{\partial u}{\partial\nu}=-\cos\theta\sqrt{1+|Du|^{2}} &\mathrm{\mbox {on}}& \   \partial\Omega.\\
\endaligned\right.
   \end{split}
\end{equation}
Assume that $f(x,\,t)$ is a positive smooth function defined on $\Omega \times \mathbf{R}$ with $f_t \ge 0$ and $\theta(x)$ is a smooth function defined on $\overline{\Omega}$ with $ |\cos\theta|\leq 1-b<1$ for some positive constant $b$.  $\nu$~ is denoted to be the inward unit normal along ~$\partial \Omega$~. Also we assume that we have already got the ~$C^{0}$~ estimate as ~$|u|\leq M$~. Then, there exists a positive constant $C=C(M,\, n,\, \Omega,\, b,\, |\theta|_{C^2(\overline{\Omega})} ,\, |f|_{C^1(\Omega\times[-M,\ M])})$ such that
\begin{equation}
\begin{aligned}
|Du|\le C.
\end{aligned}
\end{equation}

\end{theorem}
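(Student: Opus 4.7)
The plan is to bound $|Du|$ by a maximum-principle argument on an auxiliary function defined over $\overline{\Omega}$, splitting into an interior case and a boundary case. Let $v = \sqrt{1+|Du|^2}$. Smoothly extend the inward normal $\nu$ and the angle $\theta$ into $\Omega_{\mu_1}$ via the distance function $d(x)$, and define the boundary defect
\[
h(x) := u_\nu(x) + v(x)\cos\theta(x),
\]
which vanishes identically on $\partial\Omega$. I would work with an auxiliary function of the form
\[
\Phi(x) \;=\; \log v \;+\; \varphi(u) \;+\; K\,\eta(d(x))\,h(x),
\]
where $\varphi(u) = e^{Au}$, $\eta$ is a smooth cutoff equal to $1$ near $\partial\Omega$ and supported in $\Omega_{\mu_1}$, and the constants $A, K > 0$ are to be calibrated.

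\textbf{Interior case.} Suppose $\Phi$ is maximized at $x_0 \in \Omega$. At $x_0$, $\Phi_i = 0$ and $F^{ij}\Phi_{ij} \le 0$, where $F^{ij} = \partial\sigma_k/\partial u_{ij}$. Diagonalize $(u_{ij})(x_0)$ with eigenvalues $\lambda_1 \ge \cdots \ge \lambda_n$, and use the once-differentiated equation $F^{ij}u_{ijl} = f_l + f_u u_l$ to obtain a pointwise inequality of the schematic form
\[
0 \;\ge\; \tfrac{c_1}{v^2}\sum_i F^{ii}\lambda_i^2 \;+\; A^2 e^{Au}\sum_i F^{ii} u_i^2 \;-\; C\bigl(1 + \textstyle\sum_i F^{ii}\bigr),
\]
up to controlled error from the $K\eta h$ correction. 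Proposition \ref{symmetric function} gives $\sum_i F^{ii} = (n-k+1)\sigma_{k-1}(\lambda)$, and Proposition \ref{NewtonInequality} combined with $\sigma_k = f \ge c_0 > 0$ bounds this below by a positive constant for $k$-admissible solutions. Taking $A$ large enough makes the $A^2 e^{Au}$ term dominate the defect $C\sum F^{ii}$; the quadratic term in $\lambda_i$ then provides a strictly positive contribution once $v(x_0)$ is large, contradicting the sign. Hence $v(x_0) \le C$.

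\textbf{Boundary case.} If the maximum occurs at $x_0 \in \partial\Omega$, then $h(x_0) = 0$ and the correction term does not contribute to $\Phi(x_0)$. The Hopf principle gives $\Phi_\nu(x_0) \le 0$. Direct differentiation yields
\[
\Phi_\nu(x_0) \;=\; \tfrac{1}{v^2}\,u_l u_{l\nu} \;+\; \varphi'(u)\,u_\nu \;+\; K\, h_\nu,
\]
and tangentially differentiating the boundary condition on $\partial\Omega$ together with the strict obliqueness $|\cos\theta|\le 1-b$ produces a lower bound of the form $h_\nu \ge c_0\, b\, v - C$ after routine manipulation. Choosing $K$ sufficiently large in terms of $b$ and $|\theta|_{C^2(\overline{\Omega})}$ makes $\Phi_\nu(x_0) > 0$ whenever $v(x_0)$ is large, a contradiction. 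Hence $v(x_0) \le C$.

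\textbf{Main obstacle.} Combining both cases gives $|Du| \le C$ globally. The chief difficulty lies in the interior step: the positive contributions from $(c_1/v^2)\sum F^{ii}\lambda_i^2$ and $A^2 e^{Au}\sum F^{ii} u_i^2$ must genuinely dominate the bad defect $C\sum F^{ii}$, but at the max point we control only $\sum F^{ii}$ from below, not the individual $F^{ii}$ corresponding to the largest eigenvalues. Proposition \ref{Elliptic} is needed to compare $F^{kk}$ to $\sum F^{ii}$, and one must rotate coordinates or project $u_i$ onto the direction of $\lambda_1$ so that the correct pairing $F^{ii}\lambda_i^2$ appears. Calibrating $A$ and $K$ simultaneously so that both the interior and boundary arguments close — without the $A^2 e^{Au}$ factor wrecking the boundary balance — is where the main technical work sits.
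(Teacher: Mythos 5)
Your broad framework (split into interior and boundary cases, maximize an auxiliary function involving $\log v$ and a height term) is the same as the paper's, but your choice of auxiliary function introduces a gap at the boundary that the paper's construction is specifically engineered to avoid.

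The paper takes
\[
\Phi = \log w + h(u) + \tau d, \qquad w = v + \Bigl(\sum_l u_l d_l\Bigr)\cos\theta ,
\]
so that the boundary-condition term is built \emph{inside} the logarithm rather than added as a separate penalty $K\eta(d)\,h(x)$ with $h = u_\nu + v\cos\theta$. The payoff is a cancellation you lose in your version. At a boundary max $x_0$ with $\nu = \partial_n$, one computes
\[
w_n \;=\; v_n + u_{nn}\cos\theta + u_n(\cos\theta)_n
\;=\; \Bigl(\tfrac{\sum_{i<n}u_i u_{in}}{v} + \tfrac{u_n u_{nn}}{v}\Bigr) + u_{nn}\cos\theta + u_n(\cos\theta)_n ,
\]
and the boundary condition $u_n = -v\cos\theta$ makes $\frac{u_n u_{nn}}{v} = -u_{nn}\cos\theta$, so the two $u_{nn}$ terms cancel identically. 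The remaining $u_{in}$'s (tangential index $i$) are recovered by differentiating the boundary condition tangentially, which is legitimate. By contrast, in your version
\[
\Phi_\nu = \tfrac{v_\nu}{v} + \varphi'(u)u_\nu + K h_\nu ,
\]
the $u_{nn}$ contributions do not cancel: $\tfrac{v_\nu}{v}$ contributes $-\tfrac{\cos\theta}{v}u_{nn}$ while $h_\nu$ contributes $+\,u_{nn}\sin^2\theta$, so the net coefficient is $K\sin^2\theta - \tfrac{\cos\theta}{v}$, which cannot be annihilated by any choice of the constant $K$. Since $u_{nn}$ is a purely normal second derivative, it is \emph{not} controlled by tangential differentiation of the boundary data, and there is no $C^2$ bound available at the gradient-estimate stage. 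Your asserted bound $h_\nu \ge c_0 b\,v - C$ therefore does not follow; $h_\nu$ can be arbitrarily negative if $u_{nn}$ is very negative, and the boundary case does not close.

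A secondary issue: in the interior case your first-order condition $\Phi_i = 0$ now ties $v_i/v$ to $K\eta\,h_i$, and $h_i$ contains mixed second derivatives $u_{\nu i}$; you then need $F^{ij}(\eta h)_{ij}$ controlled, which produces uncontrolled products of second derivatives beyond the usual terms. The paper sidesteps this too: with $w$ inside the log, the critical-point identity $w_i = -w(h'u_i + \tau d_i)$ directly bounds the combination $\bigl(\tfrac{u_l}{v}+d_l\cos\theta\bigr)u_{li}$, which is exactly what is needed to make the "good'' index set $K$ argument run (giving $u_{ii}\le 0$ for those indices and hence $F^{ii}\ge cF$ via Proposition~\ref{Elliptic}). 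If you want to salvage an additive-penalty approach you would have to pick the penalty so that both the boundary $u_{nn}$ terms and the interior mixed terms cancel; the paper's $w$ does this automatically, which is the heart of the construction inherited from Deng--Ma.
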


\begin{proof}

Due to \cite{XJW1}, we have already known the interior gradient estimate, so we only need to get the gradient estimate near boundary, denoted by ~$\Omega_{\mu}$~, where ~$\mu\le \mu_1$~ is a positive constant to be determined later.

Let $v=\sqrt{1+|Du|^{2}}$,\ $w=v+\sum\limits_{l=1}^nu_{l}d_l\cos\theta$ and let $h(t),\ \tau$  be a smooth function and a positive constant respectively to be determined later. We choose the auxiliary function
\begin{equation*}
\begin{aligned}
\Phi=\log w+h(u)+\tau d\,.
\end{aligned}
\end{equation*}
Assume $\Phi$~achieves its maximum on the domain $\overline{\Omega_{\mu}}$ at the point~$x_{0}$, according to the interior gradient estimate, we can only consider  the following two cases.

{\bf Case I:~$x_{0}\in\partial\Omega$.}

For convenience, we choose a coordinate around $x_0$ such that $\nu=\frac{\partial}{\partial x_n}$, assume $\frac{\partial}{\partial x_i}(i=1,2,\cdots,n-1)$ are tangent to $\partial \Omega$. Under this coordinate, we have

$$\frac{{\partial d}}{{\partial {x_i}}} = 0,\  \frac{{\partial d}}{{\partial {x_n}}} = 1,\   \frac{{{\partial ^2}d}}{{\partial {x_n}\partial {x_\alpha }}} = 0,\   \frac{{{\partial ^2}d}}{{\partial {x_i}\partial {x_j}}} =  - {\kappa _i}{\delta _{ij}}, $$
 where $1\le i,j<n-1$, $1\le \alpha \le n$ and $\kappa_i(i=1,2,\cdots,n-1)$ are the principal curvatures of $\partial \Omega$ at $x_0$.

By the fact that $x_0$ is the maximum point on the boundary, we have
   \begin{equation}\label{Phi-i}
\begin{split}
   0 =& \Phi_i = \frac{w_i}{w} + h'u_i + \alpha d_i = \frac{w_i}{w } + h'u_i, \ \ i=1,2,\cdots,n-1\,,
     \end{split}
\end{equation}
  and
  \begin{equation}\label{Phi-n}
\begin{split}
   0 \ge& \Phi_n = \frac{w_n}{w} + h'u_n + \alpha d_n = \frac{w_n}{w} + h'u_n + \alpha.
     \end{split}
\end{equation}

   By a direct computation, we have
   \begin{equation}\label{eta-n}
\begin{split}
w_n =& v_n + u_{n n} \cos\theta + u_n (\cos \theta)_n   \\
   =& \frac{\sum\limits_{\alpha = 1}^n u_\alpha u_{\alpha n}}{v} + u_{nn}\cos \theta  + u_n (\cos \theta )_n \\
   =& \frac{\sum\limits_{i = 1}^{n - 1} u_i u_{in}}{v} + \frac{ u_n u_{nn}}{v} + u_{nn}\cos \theta  + u_n (\cos \theta )_n \\
      =& \frac{\sum\limits_{i = 1}^{n - 1} u_i u_{ni}}{v} + \frac{\sum\limits_{i,j = 1}^{n - 1} u_i k_{ij} u_j}{v} + u_n (\cos \theta )_n,
\end{split}
\end{equation}
where we denote by  $k_{ij}$ the Weingarten matrix of the boundary with respect to $\nu$\,.

   Differentiating $u_n $ along $\partial \Omega $, we obtain for $i=1,2,\cdots,n-1$ that
   \ba
    u_{ni} =& (- v\cos \theta)_i = -{v_i}\cos \theta - v(\cos \theta )_i\\
    =& -(w_i -u_{ni}\cos \theta-\sum_{l=1}^{n-1}u_ld_{li}\cos\theta-u_n(\cos \theta )_i)\cos \theta - v(\cos \theta )_i \\
   =& -w_i \cos \theta  + u_{ni}\cos^2\theta  +\sum_{l=1}^{n-1}u_ld_{li}\cos^2\theta+ u_n \cos \theta (\cos \theta )_i - v(\cos \theta )_i\,,
   \ea
   furthermore, using \eqref{Phi-i}
      we can get
   \begin{equation}\label{wni}
\begin{split}
u_{ni} = \frac{ h'w{u_i}\cos \theta+\sum\limits_{l=1}^{n-1}u_ld_{li}\cos^2\theta  - v(1 + \cos^2 \theta )(\cos \theta )_i}{{\sin^2}\theta }.
\end{split}
\end{equation}
   Plug \eqref{wni} into \eqref{eta-n} we then have
    \ba
   w_n =& \frac{\sum\limits_{i = 1}^{n - 1} u_i u_{ni}}{v} + \frac{\sum\limits_{i,j = 1}^{n - 1} u_i k_{ij} u_j }{v} + u_n (\cos \theta )_n\\
   =&\frac{\sum\limits_{i = 1}^{n - 1} u_i\left[ h'w{w_i}\cos \theta +\sum\limits_{l=1}^{n-1}u_ld_{li}\cos^2\theta  - v(1 + \cos^2 \theta )(\cos \theta )_i\right]}{v\sin^2 \theta } \\
   &+ \frac{\sum\limits_{i,j = 1}^{n - 1} u_i k_{ij} u_j}{v} + u_n(\cos \theta )_n.
   \ea
   Then
   \ba 0 \ge\Phi_n=& \frac{\sum\limits_{i = 1}^{n - 1} u_i [ h'w{u_i}\cos \theta +\sum\limits_{l=1}^{n-1}u_ld_{li}\cos^2\theta  - v(1 + \cos^2 \theta )(\cos \theta )_i]}{w v{\sin^2}\theta } \\
   &+ \frac{\sum\limits_{i,j = 1}^{n - 1} u_i k_{ij} u_j }{w v}   + \frac{ u_n(\cos \theta )_n}{w} + h'{u_n} + \alpha \\
    =& \frac{h'\cos \theta \sum\limits_{i = 1}^{n - 1}u_i^2 }{v\sin^2\theta} - \frac{(1 + \cos^2\theta )\sum\limits_{i = 1}^{n - 1}u_i (\cos \theta )_i}{w \sin^2 \theta }+\frac{\sum\limits_{i,l=1}^{n-1}u_ld_{li}u_i\cos^2\theta }{w v{\sin^2}\theta}\\
     &+ \frac{\sum\limits_{i,j = 1}^{n - 1} u_i  k_{ij} u_j }{w v}+ \frac{ u_n (\cos \theta )_n}{w} + h'{u_n} + \alpha \\
   =& \frac{h'\cos \theta (v^2\sin^2\theta - 1)}{v\sin^2\theta} - \frac{(1 + \cos^2\theta )\sum\limits_{i = 1}^{n - 1}u_i (\cos \theta )_i}{w \sin^2 \theta }+\frac{\sum\limits_{i,l=1}^{n-1}u_ld_{li}u_i\cos^2\theta }{w v{\sin^2}\theta}\\
     &+ \frac{\sum\limits_{i,j = 1}^{n - 1} u_i  k_{ij} u_j }{w v}+ \frac{ u_n (\cos \theta )_n}{w} + h'{u_n} + \alpha \\
   =& - \frac{h'\cos \theta}{v\sin^2\theta} -  \frac{(1 + \cos^2\theta )\sum\limits_{i = 1}^{n - 1}u_i(\cos \theta )_i }{w \sin^2\theta} +\frac{\sum\limits_{i,l=1}^{n-1}u_ld_{li}u_i\cos^2\theta }{w v{\sin^2}\theta}\\
   &+ \frac{\sum\limits_{i,j = 1}^{n - 1} u_i  k_{ij}u_j }{w v}+ \frac{ u_n (\cos \theta )_n}{w} + \alpha .
   \ea
   Without loss of generality, we may assume that $v$  is large such that if  $\alpha $ is chosen large enough determined by $\theta $ and the geometry of  $\partial \Omega$, the right hand of the above inequality will be positive which shows that this case will not occur at all.

{\bf Case II:~$x_{0}\in\Omega_{\mu}$\,.}

At this point, we can assume that $|Du|$ is large enough such that $|Du|, w, v$ are equivalent with each other. Remark that the Einstein summation convention will be adopted during all the calculations if no otherwise specified.

Since $x_{0}$ is the maximum point, we then have
\begin{equation*}
\begin{aligned}
\Phi_{i}=\frac{w_{i}}{w}+h^{'}u_{i}+\tau d_{i},
\end{aligned}
\end{equation*}
it follows that
\begin{equation}\label{1-order-1}
\begin{aligned}
w_{i}=-w(h^{'}u_{i}+\tau d_{i})\,.
\end{aligned}
\end{equation}
By the definition of ~$w$~, we have
\begin{equation*}
\begin{aligned}
w_{i}&=\frac{u_{l}u_{li}}{v}+u_{li}d_{l}\cos \theta+u_{l}d_{li}\cos \theta+u_{l}d_{l}(\cos \theta)_{i}\\
&=(\frac{u_{l}}{v}+d_{l}\cos \theta)u_{li}+u_{l}d_{li}\cos \theta+u_{l}d_{l}(\cos \theta)_{i}.
\end{aligned}
\end{equation*}
Therefore,
\begin{equation}\label{1-order}
\begin{aligned}
-w(h^{'}u_{i}+\tau d_{i})=(\frac{u_{l}}{v}+\cos \theta d_{l})u_{li}+u_{l}d_{li}\cos \theta+u_{l}d_{l}(\cos \theta)_{i}\,.
\end{aligned}
\end{equation}

We now come to deal with $\Phi_{ij}$. By \eqref{1-order-1} we derive that
\begin{equation}
\begin{aligned}
\nonumber\Phi_{ij}&=\frac{w_{ij}}{w}-\frac{w_{i}w_{j}}{w^{2}}+h'u_{ij}+h'' u_{i}u_{j}+\tau d_{ij}\\
&=\frac{w_{ij}}{w}-(h' u_{i}+\tau d_{i})(h' u_{j}+\tau d_{j})+h' u_{ij}+h'' u_{i}u_{j}+\tau d_{ij}\\
&=\frac{w_{ij}}{w}-\tau h' u_{i}d_{j}-\tau h' u_{j}d_{i}-\tau^{2}d_{i}d_{j}+h' u_{ij}+[h''-(h')^{2}]u_{i}u_{j}+\tau d_{ij}\,.
\end{aligned}
\end{equation}

Following \cite{DM}, we take the coordinate around $x_{0}$ such that $(u_{ij}) $ is diagonal at this point and all the following calculation will be done at this point. Denoted by $F^{ij}$ the derivative $\frac{\partial \sigma_{k}(u_{ij})}{\partial u_{ij}}$ and  $F$ the sum $\sum_{i=1}^n F^{ii}.$ We then have
\begin{equation}\label{111}
\begin{aligned}
0\ge F^{ij}\Phi_{ij}&=\frac{F^{ij}w_{ij}}{w}+[h''-(h')^{2}]F^{ij}u_{i}u_{j}+h' F^{ij}u_{ij}\\
&\quad+\tau F^{ij}d_{ij}-2\tau h' F^{ij}u_{i}d_{j}-\tau^{2}F^{ij}d_{i}d_{j}.\\
&=\uppercase\expandafter{\romannumeral1}+\uppercase\expandafter{\romannumeral2}+\uppercase\expandafter{\romannumeral3}\,,
\end{aligned}
\end{equation}
where
\begin{equation*}
\begin{aligned}
\uppercase\expandafter{\romannumeral1}&=\frac{F^{ij}w_{ij}}{w}\,,\\
\uppercase\expandafter{\romannumeral2}&=[h''-(h')^{2}]F^{ij}u_{i}u_{j}\,,\\
\uppercase\expandafter{\romannumeral3}&=h' F^{ij}u_{ij}+\tau F^{ij}d_{ij}-2\tau h' F^{ij}u_{i}d_{j}-\tau^{2}F^{ij}d_{i}d_{j}\,.
\end{aligned}
\end{equation*}

For the last term, we can easily have
\begin{equation}\label{III}
\begin{aligned}
\uppercase\expandafter{\romannumeral3}=h' F^{ij}u_{ij}+\tau F^{ij}d_{ij}-2\tau h' F^{ij}u_{i}d_{j}-\tau^{2}F^{ij}d_{i}d_{j}\geq -C |Du| F\,.
\end{aligned}
\end{equation}

In the following, we come to deal with the first term $\uppercase\expandafter{\romannumeral1}$. The key point is to calculate $F^{ij}w_{ij}$. By a direct calculation, we can deduce that
\begin{equation}
\begin{aligned}
\nonumber w_{ij}&=(\frac{u_{l}}{v}+d_{l}\cos \theta)u_{lij}+(\frac{u_{l}}{v}+d_{l}\cos \theta)_{j}u_{li}\\
&\quad +u_{lj}d_{li}\cos \theta+u_{l}(d_{li}\cos \theta)_{j}+u_{lj}d_{l}(\cos \theta)_{i}+u_{l}(d_{l}(\cos \theta)_{i})_{j}\\
&=(\frac{u_{l}}{v}+d_{l}\cos \theta)u_{lij}+(\frac{u_{lj}}{v}-\frac{u_{l}u_{k}u_{kj}}{v^{3}})u_{li}+( d_{l}\cos \theta)_{j}u_{li}\\
&\quad+u_{lj}d_{li}\cos \theta +u_{l}(d_{li}\cos \theta)_{j}+u_{lj}d_{l}(\cos \theta)_{i}+u_{l}(d_{l}(\cos \theta)_{i})_{j}.
\end{aligned}
\end{equation}
Hence,
\begin{equation}
\begin{aligned}
F^{ij}w_{ij}&=(\frac{u_{l}}{v}+d_{l}\cos \theta)D_{l}f+(\frac{1}{v}-\frac{u_{i}^{2}}{v^{3}})F^{ii}u_{ii}^{2}+(d_{i}\cos \theta)_{i}F^{ii}u_{ii}\\
&\quad+F^{ii}u_{ii}d_{ii}\cos \theta +F^{ij}u_{l}(d_{li}\cos \theta)_{j}+F^{ii}u_{ii}d_{i}(\cos \theta)_{i}+F^{ij}u_{l}(d_{l}(\cos \theta)_{i})_{j}\\
&\geq (\frac{1}{v}-\frac{u_{i}^{2}}{v^{3}})F^{ii}u_{ii}^{2}+2(d_{i}\cos \theta)_{i}F^{ii}u_{ii}-C |Du| F-C|Du|.
\end{aligned}
\end{equation}

For the choice of the coordinate and \eqref{1-order}, we have at $x_0$ that
\begin{equation}
\begin{aligned}
 -w(h' u_{i}+\tau d_{i})=(\frac{u_{i}}{v}+ d_{i}\cos \theta)u_{ii}+u_{l}(d_{l}\cos \theta )_{i}\,,\  \ i=1, 2, \cdots, n.
\end{aligned}
\end{equation}

Setting
\begin{equation}
\nonumber K=\{ i\in \mathrm{I}\ \mid\ |d_{i}\cos \theta|+\frac{b}{8n}\leq |\frac{u_{i}}{v}| \},
\end{equation}
where\ $\mathrm{I}=\{1, 2, \cdot\cdot\cdot, n\}$. It is obvious that the index set $K$ is not empty and if we further assume that $v$ is large enough, we can assume that
$$|\tau d_{i}|\leq \frac{1}{2} h' |u_{i}|,\ |u_{l}(d_{l}\cos \theta)_{i}| \leq \frac{1}{4} | h' w u_{i}| \ \ \mbox{for}\ \  i\in K.$$
Note that we here need $h'$  have a positive bound which will be satisfied later. Under these assumptions, we have
\begin{equation}\label{uiiestimate}
  \begin{split}
-Ch'w|u_i|\le u_{ii}\leq 0 \ \ \mbox{for}\ \  i\in K.
  \end{split}
\end{equation}
Then for $i\in K$, we have by \eqref{Fkk-F} that
$$
 F^{ii}\geq F^{kk}\geq CF\,.
$$

Hence,
\begin{equation}
\begin{aligned}
F^{ij}w_{ij}\geq & \sum\limits_{i=1}^{n}\left( (\frac{1}{v}-\frac{u_{i}^{2}}{v^{3}})F^{ii}u_{ii}^{2}-2(d_{i}\cos \theta)_{i}F^{ii}u_{ii}\right )-C|Du|F-C|Du|\\
=&\sum\limits_{i\in K}\left((\frac{1}{v}-\frac{u_{i}^{2}}{v^{3}})F^{ii}u_{ii}^{2}-2( d_{i}\cos \theta)_{i}F^{ii}u_{ii}\right )\\
&+\sum\limits_{i\notin K}\left((\frac{1}{v}-\frac{u_{i}^{2}}{v^{3}})F^{ii}u_{ii}^{2}-2(d_{i}\cos \theta )_{i}F^{ii}u_{ii}\right )\\
&-(C|Du|F+C|Du|)\\
=&T_{1}+T_{2}+T_{3}.
\end{aligned}
\end{equation}

For the term $T_1$, according to \eqref{uiiestimate} we have
\begin{equation}
\begin{aligned}
T_{1}=&\sum\limits_{i\in K}\left((\frac{1}{v}-\frac{u_{i}^{2}}{v^{3}})F^{ii}u_{ii}^{2}-2( d_{i}\cos \theta)_{i}F^{ii}u_{ii}\right )\\
\geq&\sum\limits_{i\in K}(-2(d_{i}\cos \theta)_{i}F^{ii}u_{ii})\geq-Cv^{2}F\,,
\end{aligned}
\end{equation}
and for the term $T_2$, because of the definition of $K$ and the fact $ax^2+bx\ge -\frac{b^2}{4a}$ for $a>0$, we have
\begin{equation}
\begin{aligned}
T_{2}\geq&\sum\limits_{i\notin K}\left((\frac{1}{v}-\frac{u_{i}^{2}}{v^{3}})F^{ii}u_{ii}^{2}-2(d_{i}\cos \theta)_{i}F^{ii}u_{ii}\right )\\
\geq & \sum\limits_{i\notin K}(\frac{C}{vF^{ii}}(F^{ii}u_{ii})^{2}-2(d_{i}\cos \theta)_{i}F^{ii}u_{ii})\geq-CvF\,.
\end{aligned}
\end{equation}

It follows that
\begin{equation}
\begin{aligned}
I=\frac{F^{ij}w_{ij}}{w}\geq-CvF-CF-C\,.
\end{aligned}
\end{equation}

For the term $II$,
\begin{equation}
\begin{aligned}
II= &[h^{''}-(h^{'})^{2}]F^{ij}u_{i}u_{j}=[h^{''}-(h^{'})^{2}]\sum\limits_{i=1}^{n}F^{ii}u_{i}^{2}\\
\geq & [h^{''}-(h^{'})^{2}]\sum\limits_{i\in K}F^{ii}u_{i}^{2}\geq C[h^{''}-(h^{'})^{2}]v^{2}F\,.
\end{aligned}
\end{equation}

By the Newton-MacLaurin inequality stated in Proposition \ref{NewtonInequality}, we have
\begin{equation}
\begin{aligned}
F\geq C>0,
\end{aligned}
\end{equation}
therefore,
\begin{equation}
\begin{aligned}
0&\geq\frac{F^{ij}\Phi_{ij}}{F}=I+II+III\geq C[h^{''}-(h^{'})^{2}]v^{2}-Cv-C-\frac{C}{F}\\
&\geq C[h^{''}-(h^{'})^{2}]v^{2}-Cv-C\,.
\end{aligned}
\end{equation}

If we take ~$h(t)=\frac{1}{2}\ln \frac{1}{{(3M - t)}}$~, then~$h^{''}-(h^{'})^{2}=(h^{'})^{2}$ and $h(t)$ satisfies all the assumptions we have set in advance.  Thus we bound the gradient at this point such that~$v\leq C$~, then we derive the gradient estimate near the boundary by a standard discussion. Thus we complete the proof of Theorem \ref{PCAB}

\end{proof}
\section{Oblique derivative boundary value}

 In this section, we will get the a priori gradient estimate of the solution to Hessian equations with oblique derivative boundary value. Specifically, we will show the following result.

\begin{theorem}\label{OBB}
Let $\Omega$ be a smooth bounded domain in $\mathbf{R}^n(n\ge 2)$ and $u$ be the admissible solution to the following Hessian equations with oblique derivative boundary value,
\begin{equation}\label{ODB}
   \begin{split}
   \left\{\aligned
&\sigma_{k}(u_{ij})=f(x,\,u)  &\mathrm{\mbox {in}} & \ \Omega,\\
&\frac{\partial u}{\partial\beta}=\varphi(x,\,u) &\mathrm{\mbox {on}}& \   \partial\Omega.\\
\endaligned\right.
   \end{split}
\end{equation}
Where $f(x,\,t)$ is a positive smooth function defined on $\Omega \times \mathbf{R}$ with $f_t \ge 0$, $\varphi(x,\,t)$ is a smooth function defined on $\overline{\Omega}\times\mathbf{R}$ and $\beta$ is a smooth unit vector field along $\partial \Omega$ with $ \langle \beta,\ \nu \rangle\geq c_0>0$ for some positive constant $c_0$,  $\nu$~ is denoted to be the inward unit normal along ~$\partial\Omega$~. Also we assume that we have already got the ~$C^{0}$~ estimate as ~$|u|\leq M$~. Then, there exists a positive constant $C=C(M,\, n,\, \Omega,\, c_0,\, |\beta|_{C^3({\partial\Omega})} ,\, |f|_{C^1(\Omega\times [-M,\ M])},\, |\varphi|_{C^3(\Omega\times [-M,\ M])})$ such that
\begin{equation}
\begin{aligned}
|Du|\le C.
\end{aligned}
\end{equation}

\end{theorem}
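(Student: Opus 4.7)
The plan is to adapt the scheme of Theorem \ref{PCAB} to the linear oblique setting. By Wang's interior gradient estimate \cite{XJW}, it suffices to bound $|Du|$ on the boundary strip $\Omega_{\mu} = \{x \in \Omega : d(x) \le \mu\}$ for some $\mu \le \mu_{1}$ to be chosen. Extend $\beta$ smoothly to $\overline{\Omega_{\mu}}$, write $v = \sqrt{1+|Du|^{2}}$ and $g(x,u,Du) := \beta^{l}(x) u_{l} - \varphi(x,u)$, so that $g \equiv 0$ on $\partial\Omega$. I would work with the auxiliary function
\begin{equation*}
\Phi(x) = \log v + h(u) + A\,g(x,u,Du) + \tau\,d(x),
\end{equation*}
where $h$ is a positive smooth function and $A, \tau>0$ are constants to be chosen. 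Let $x_{0}$ be a maximum of $\Phi$ over $\overline{\Omega_{\mu}}$.

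\emph{Case I: $x_{0} \in \partial\Omega$.} Since $\langle\beta, \nu\rangle \ge c_{0}>0$ and $\nu$ is the inward normal, $\beta$ is inward-pointing, so $\Phi_{\beta}(x_{0}) \le 0$. Computing
\begin{equation*}
\Phi_{\beta} = \frac{v_{\beta}}{v} + h'(u)\,\varphi + A\,g_{\beta} + \tau\,\langle\beta, Dd\rangle,
\end{equation*}
and using tangential differentiation of $u_{\beta}=\varphi$ (together with $|\beta|=1$, which gives $\beta^{l}\beta^{l}_{i}=0$) to eliminate the normal-tangential Hessian components appearing in $v_{\beta}$, the remaining mixed-derivative term of the form $\beta^{n}\beta^{l}u_{ln}$ is absorbed into $Ag_{\beta}$. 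Once $\tau$ is chosen large compared with $|\beta|_{C^{2}}, |\varphi|_{C^{2}}, c_{0}^{-1}$ (and $|Du|$ is large), the term $\tau\,\langle\beta,Dd\rangle\ge\tau c_{0}$ dominates and forces $\Phi_{\beta}>0$, contradicting $\Phi_{\beta}(x_{0})\le 0$. So this case is ruled out.

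\emph{Case II: $x_{0} \in \Omega_{\mu}$.} Diagonalize $(u_{ij})$ at $x_{0}$ and set $F^{ij} = \partial\sigma_{k}/\partial u_{ij}$, $F = \sum_{i} F^{ii}$. The relations $D\Phi(x_{0}) = 0$ and $F^{ij}\Phi_{ij}(x_{0}) \le 0$, combined with the identities $F^{ii}u_{ii} = k f$ and $F^{ij}u_{lij} = D_{l}f$, reduce after standard estimation to
\begin{equation*}
0 \ge \frac{F^{ij}v_{ij}}{v} - \frac{F^{ii}v_{i}^{2}}{v^{2}} + [h''-(h')^{2}]\,F^{ii}u_{i}^{2} - C(1+|Du|)F.
\end{equation*}
Introducing $K = \{i : |u_{i}|\ge v/(2n)\}$, which is non-empty, the first-order condition forces $u_{ii} = -v^{2}h' + O(v)$ for $i\in K$; these indices therefore correspond to the smallest eigenvalues, and Proposition \ref{Elliptic} gives $F^{ii}\ge CF$ for $i\in K$. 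Combined with the Newton–MacLaurin lower bound $F \ge C>0$ from Proposition \ref{NewtonInequality}, one arrives at $0 \ge C[h''-(h')^{2}]v^{2} - C(1+|Du|)$. The choice $h(u) = \tfrac{1}{2}\log\tfrac{1}{3M-u}$, as in Section 3, makes $h''-(h')^{2} = (h')^{2}$ positive and bounded below on $[-M,M]$, yielding $v(x_{0}) \le C$.

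The principal obstacle is Case I. In the PCAB setting, the prescribed angle gave the algebraic identity $u_{\nu} = -\cos\theta\,v$, which let one solve explicitly for $u_{\nu i}$ and dispose of the unbounded-in-$v$ contributions to $\Phi_{\beta}$. In the oblique setting $\beta$ is not parallel to $\nu$, so tangential differentiation of $u_{\beta}=\varphi$ produces $u_{\beta\tau}$ rather than $u_{\nu\tau}$, and these mix with the boundary second fundamental form in a less transparent way. The role of the $Ag$-correction is precisely to absorb these mixed terms on $\partial\Omega$; the delicate point of the argument is the joint tuning of $A, \tau, \mu$ to make the signs in $\Phi_{\beta}$ compatible uniformly in large $v$, using the obliqueness constant $c_{0}$.
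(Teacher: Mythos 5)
Your auxiliary function is genuinely different from the one in the paper, and I believe it cannot be made to work: the obstruction you yourself flag at the end — absorbing the mixed second derivative on $\partial\Omega$ by tuning a constant $A$ — is not a matter of delicacy but a structural impossibility, and the paper resolves it by a different construction.

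The paper first replaces $u$ by $w = u - \varphi d/\cos\theta$, so that $w$ satisfies the \emph{homogeneous} oblique condition $\beta_n w_\nu + \sum_{l<n}\beta_l w_l = 0$ on $\partial\Omega$, and then works with the \emph{reduced} gradient
\begin{equation*}
\phi = |Dw|^2 - \langle Dw, Dd\rangle^2 = \sum_{\alpha,\delta}(\delta_{\alpha\delta} - d_\alpha d_\delta)w_\alpha w_\delta ,
\end{equation*}
taking $\Phi = \log\phi + h(u) + \tau d$. The point of $\phi$ is that it cancels the dangerous Hessian entries \emph{automatically}: on $\partial\Omega$ one computes $\phi_n = 2\sum_{i<n}w_iw_{in}$ (no $w_{nn}$) and $\phi_i = 2\sum_{j<n}w_{ij}w_j - 2w_n\sum_j d_{ij}w_j$ (no $w_{ni}$), and the remaining $w_{in}$ in $\phi_n$ is then eliminated by tangential differentiation of the homogeneous condition, with the leftover tangential $w_{ij}$'s killed by the maximum condition $\Phi_i = 0$. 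No constant needs tuning.

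Your $\Phi = \log v + h(u) + Ag + \tau d$ has no such built-in cancellation. On $\partial\Omega$, tangential derivatives of $g$ vanish (since $g\equiv 0$ there), so $g_\beta = \beta_n g_\nu$ and the coefficient of $u_{nn}$ in $Ag_\beta$ is $A\beta_n^2$. Meanwhile the coefficient of $u_{nn}$ in $v_\beta/v = u_l\beta_m u_{lm}/v^2$ is $u_n\beta_n/v^2$. For these to cancel one would need $A\beta_n = -u_n/v^2$, which is a point-dependent quantity and, in the general oblique setting, of uncontrolled sign and size: from $\beta_n u_n + \sum_{i<n}\beta_i u_i = \varphi$ one gets $u_n = (\varphi - \sum\beta_i u_i)/\beta_n$, so $u_n$ can be $O(|Du|)$ with either sign. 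Hence no constant $A>0$ makes the $u_{nn}$ contribution to $\Phi_\beta$ uniformly harmless, and $\tau c_0$ cannot dominate, so Case I is not ruled out. (In Section 3 the prescribed-angle condition $u_\nu = -\cos\theta\,v$ fixes $u_n/v$ \emph{exactly}, which is why the correction $\cos\theta\sum u_l d_l$ in $w = v + \cos\theta\sum u_l d_l$ kills the $u_{nn}$ coefficient with coefficient one; no analogous exact relation is available here, and the paper does not attempt one.)

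A secondary gap is in Case II: the $Ag$ term contributes $2A F^{ij}\beta^l_i u_{lj}$ to $F^{ij}\Phi_{ij}$, which in diagonal coordinates is of order $A\sum_i|F^{ii}u_{ii}|$. This term is absent from your reduced inequality and is not controlled for free by the good quadratic $\sum_i(1/v - u_i^2/v^3)F^{ii}u_{ii}^2$, because that coefficient degenerates for the index with $u_i\approx v$. The paper meets the analogous difficulty (arising from $C^{kl}_{,j}$ and from derivatives of $\varphi d/\cos\theta$) with the quadratic-form $G(C^{11},\dots,C^{nn},x_2,\dots,x_n)$ argument together with the substitution $F^{11}u_{11} = kf - \sum_{\alpha\ge 2}F^{\alpha\alpha}u_{\alpha\alpha}$; your sketch stops short of that point.
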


\begin{proof}
Firstly, we say some words about the boundary value.

Taking a unit normal moving frame along $\partial \Omega$, denonted by $ \{e_1,\,e_2,\,\cdots, \,e_{n-1},\, \nu  \}$, then $\beta$ can be represented as
\begin{equation}
\begin{aligned}
\beta=\beta_n \nu+\sum_{l=1}^{n-1} \beta_l e_l\,,
\end{aligned}
\end{equation}
where $\beta_n=\langle \beta,\ \nu\rangle=\cos \theta$ which is bounded from below by the positive constant $c_0$ according to the conditions of Theorem \ref{OBB}.

By the boundary data, we have
\begin{equation}\label{b1}
\begin{aligned}
\varphi (x,\,u) = \frac{{\partial u}}{{\partial \beta }} =  < Du, \beta  >  = \frac{{\partial u}}{{\partial \nu}}{\beta _n} + \sum\limits_{l = 1}^{n - 1} {{\beta _l}{u_l}}\,.
\end{aligned}
\end{equation}

Setting $w = u - \frac{{\varphi d}}{{\cos \theta }}$, we then have
\begin{equation}\label{b2}
\begin{aligned}
\varphi (x,\,u)= \frac{{\partial (w + \frac{{\varphi d}}{{\cos \theta }})}}{{\partial \nu}}\cos \theta  + \sum\limits_{l = 1}^{n - 1} {{\beta _l}{(w + \frac{{\varphi d}}{{\cos \theta }})_l}}\,,
\end{aligned}
\end{equation}
which indicates that
\begin{equation}\label{b3}
\begin{aligned}
0 = \frac{{\partial w}}{{\partial \nu}}{\beta _n} + \sum\limits_{l = 1}^{n - 1} {{\beta _l}{w_l}}\,.
\end{aligned}
\end{equation}
Therefore, we have
\begin{equation}\label{b4}
\begin{aligned}
\frac{{\partial w}}{{\partial \nu}} =  - \sum\limits_{l = 1}^{n - 1} {\frac{{{\beta _l}}}{{{\beta _n}}}{w_l}}
\end{aligned}
\end{equation}
and it follows by Cauchy inequality and the fact $\sum\limits_{i=1}^n\beta_i^2=1$ that
\begin{equation}\label{b-over}
\begin{aligned}
{(\frac{{\partial w}}{{\partial \nu}})^2} \le |Dw{|^2} \cdot {\sin ^2}\theta.
\end{aligned}
\end{equation}

As before, we only need to get the gradient estimate near boundary, denoted by ~$\Omega_{\mu}$~, where ~$\mu\le \mu_1$~ is a positive constant to be determined later. We extend $\beta$ smoothly to $\Omega_\mu$, also denoted by $\beta$, such that $\langle \beta,\ Dd\rangle=\cos \theta\ge c_0$ is also assumed to be still valid. Denote by

\[\phi  = |Dw{|^2} - {(\sum\limits_{\alpha  = 1}^n {{w_\alpha }{d_\alpha }} )^2} = \sum\limits_{\alpha ,\delta  = 1}^n {({\delta _{\alpha \delta }} - {d_\alpha }{d_\delta }){w_\alpha }{w_\delta }}  = \sum\limits_{\alpha ,\delta  = 1}^n {{C^{\alpha \delta }}{w_\alpha }{w_\delta }} \]
and take the auxiliary function
$$\Phi  = \log \phi  + h(u) + \tau d, $$
where $h(t)$ is a smooth function, $\tau$ is a positive constant. Both of them will be determined later.

Assume the maximum of $\Phi$ on $\Omega_{\mu}$ is achieved at $x_0$. Also by the interior gradient estimate which has been derived in \cite{XJW1}, we only need to consider the two following cases.

{\bf Case I:~$x_{0}\in\partial\Omega$.}

As in Section 3, we choose a coordinate around $x_0$ such that $\nu=\frac{\partial}{\partial x_n}$, and $\frac{\partial}{\partial x_i}(i=1,2,\cdots,n-1)$ are tangent to $\partial \Omega$. We also have that

$$\frac{{\partial d}}{{\partial {x_i}}} = 0,\ \  \frac{{\partial d}}{{\partial {x_n}}} = 1,\ \  \frac{{{\partial ^2}d}}{{\partial {x_n}\partial {x_\alpha }}} = 0,\ \  \frac{{{\partial ^2}d}}{{\partial {x_i}\partial {x_j}}} =  - {\kappa _i}{\delta _{ij}}\,,$$
 where $1\le i,j<n-1$, $1\le \alpha \le n$ and $\kappa_i(i=1,2,\cdots,n-1)$ are the principal curvatures of $\partial \Omega$ at $x_0\in \partial \Omega$.

By the fact that $x_0$ is the maximum point of $\Phi$ on the boundary, it follows that
   \begin{equation}\label{bPhi-i}
\begin{split}
   0 =& \Phi_i = \frac{\phi_i}{\phi} + h'u_i, \ \ i=1,2,\cdots,n-1\,,
     \end{split}
\end{equation}
  and
  \begin{equation}\label{bPhi-n}
\begin{split}
   0 \ge& \Phi_n = \frac{\phi_n}{\phi} + h'u_n + \alpha d_n = \frac{w_n}{w} + h'u_n + \alpha\,.
     \end{split}
\end{equation}

From \eqref{bPhi-i}, we get
\begin{equation}\label{bPhi-i-1}
\begin{split}
   - \phi h'{u_i} = {(|Dw|^2)_i} - [(\sum\limits_{\alpha=1}^{n}{w_\alpha }{d_\alpha })^2]_i = 2\sum\limits_{j=1}^{n-1}{w_{ij}}{w_j}  - 2{w_n}\sum\limits_{j=1}^{n-1}{d_{ij}}{w_j}, \ \ i=1,2,\cdots,n-1\,.
     \end{split}
\end{equation}

We then deal with the term $\phi_n$ as follows.
\begin{equation}\label{bphi-n-1}
\begin{split}
  {\phi _n} =& 2\sum\limits_{\alpha  = 1}^n {{w_\alpha }{w_{an}}}  - 2{w_n}{w_{nn}} = 2\sum\limits_{i = 1}^{n - 1} {{w_i}{w_{in}}}  = 2\sum\limits_{i = 1}^{n - 1} {{w_i}{w_{ni}}}  + 2\sum\limits_{i,j = 1}^{n - 1} {{\kappa _{ij}}{w_i}{w_j}} \\
 =&  - 2\sum\limits_{i = 1}^{n - 1} {{w_i}{{(\frac{{{\beta _l}}}{{{\beta _n}}}{w_l})}_i}}  + 2\sum\limits_{i,j = 1}^{n - 1} {{\kappa _{ij}}{w_i}{w_j}} \\
  =&  - \frac{{2\sum\limits_{i,l = 1}^{n - 1} {{w_i}{w_{li}}{\beta _l}} }}{{{\beta _n}}} - 2\sum\limits_{i,l = 1}^{n - 1} {{w_i}{w_l}{{(\frac{{{\beta _l}}}{{{\beta _n}}})}_i}}  + 2\sum\limits_{i,j = 1}^{n - 1} {{\kappa _{ij}}{w_i}{w_j}} \\
 =& \frac{{\phi h'\sum\limits_{l = 1}^{n - 1} {{u_l}{\beta _l}} }}{{{\beta _n}}} - \frac{{2{w_n} \sum\limits_{l,j = 1}^{n - 1} {{d_{lj}}{w_j}{\beta _l}}  }}{{{\beta _n}}} - 2\sum\limits_{i,l = 1}^{n - 1} {{w_i}{w_l}{{(\frac{{{\beta _l}}}{{{\beta _n}}})}_i}}  + 2\sum\limits_{i,j = 1}^{n - 1} {{\kappa _{ij}}{w_i}{w_j}}
     \end{split}
\end{equation}
Note that the last equality comes from \eqref{bPhi-i-1} and we denote by  $k_{ij}$ the Weingarten matrix of the boundary with respect to $\nu$\,.

Therefore, it follows that
\begin{equation}\label{bphi-n-over}
\begin{split}
  0 \ge {\Phi _n} =& \frac{{\frac{{\phi h'\sum\limits_{l = 1}^{n - 1} {{u_l}{\beta _l}} }}{{{\beta _n}}} - \frac{{2{w_n}\sum\limits_{j,l = 1}^{n - 1} {{d_{lj}}{w_j}{\beta _l}} }}{{{\beta _n}}} - 2\sum\limits_{i,l = 1}^{n - 1} {{w_i}{w_l}{{(\frac{{{\beta _l}}}{{{\beta _n}}})}_i}}  + 2\sum\limits_{i,j = 1}^{n - 1} {{\kappa _{ij}}{w_i}{w_j}} }}{\phi } + h'{u_n} + \tau  \\
 =& \frac{{ - \frac{{2{w_n} \sum\limits_{l,j = 1}^{n - 1} {{d_{lj}}{w_j}{\beta _l}}  }}{{{\beta _n}}} - 2\sum\limits_{i,j = 1}^{n - 1} {{w_i}{w_l}{{(\frac{{{\beta _l}}}{{{\beta _n}}})}_i}}  + 2\sum\limits_{i,j = 1}^{n - 1} {{\kappa _{ij}}{w_i}{w_j}} }}{\phi } +\frac{h'\varphi}{\cos \theta } + \tau\,.
     \end{split}
\end{equation}
We may assume in advance that
\begin{equation}\label{hsuppose1}
  \begin{split}
0<h'(t)<1,\ \  \forall \, t\in [-M,\ M]\,.
  \end{split}
\end{equation}
 Thus, if we set $\tau$ large enough, depending upon $c_0,\, |\beta|_{C^1(\partial\Omega)},\, n$ and the geometry of $\partial\Omega$, we can conclude that this case does not occur at all.

{\bf Case II:~$x_{0}\in\Omega_{\mu}$\,.}

All the calculations will proceed at this point and the Einstein summation convention will be adopted during all the calculations if no otherwise specified.  Also, we denoted by $F^{ij}$ the derivative $\frac{\partial \sigma_{k}(u_{ij})}{\partial u_{ij}}$ and  $F$ the sum $\sum_{i=1}^n F^{ii}.$

According to \cite{XJW1}, we know that
\begin{equation}\label{to-boundary}
\begin{split}
  {\sup _\Omega }|D u| \le {C_1}(1 + {\sup _{\partial \Omega }}|Du|),
     \end{split}
\end{equation}
where $C_1$ is a positive constant depending only on $\Omega,\,n,\,k,\, |D_xf|_{C^0(\Omega\times[-M,\ M])}$. One can verify this point by setting a auxiliary function $\chi= \log |Du|^2 + \alpha |x|^2$ and checking that $F^{ij}\chi_{ij}\ge 0$ once we set $\alpha$ to be small and  $|Du|$ to be large enough. Remark that we have supposed with out loss of generality that the point $0$ is located out of $\overline{\Omega}$.

Now we assume that the maximum value of $|Du|$ on $\partial \Omega$ is achieved at the point $x_1$, without loss of generality, we can suppose that
\begin{equation}\label{bBound}
  \begin{split}
  |Du|^2({x_1}) \ge 4{\sup _{\partial \Omega }}{({\rm{|}}\frac{\varphi }{{\cos \theta }}{\rm{|}})^2},
  \end{split}
\end{equation}
otherwise we have finish the estimate of the gradient of the solutions.

By the fact that $\Phi(x_0)\ge \Phi(x_1)$, it follows that
\begin{equation}\label{phix0}
  \begin{split}
\phi ({x_0}) \ge & C(\tau ,\mu ){{\rm{e}}^{ - 2Mh'}}\phi ({x_1}) = C(\tau ,\mu ){{\rm{e}}^{ - 2Mh'}}[|Dw{|^2} - {(\frac{{\partial w}}{{\partial \nu}})^2}]({x_1})\\
\ge & C(\tau ,\mu ){{\rm{e}}^{ - 2Mh'}}[|Dw{|^2}{\cos ^2}\theta ]({x_1})\\
 \ge & {c_0}^2C(\tau ,\mu ){{\rm{e}}^{ - 2Mh'}}|Dw{|^2}({x_1})\\
 = &{c_0}^2C(\tau ,\mu ){{\rm{e}}^{ - 2Mh'}}|D u - \frac{\phi }{{\cos \theta }}\nu{|^2}({x_1})\\
 \ge & \frac{{{c_0}^2C(\tau ,\mu ){{\rm{e}}^{ - 2Mh'}}}}{4}|D u{|^2}({x_1}),
  \end{split}
\end{equation}
remark that the last inequality above comes from \eqref{bBound} and the fact that $(x-y)^2\ge \frac{x^2}{2}-y^2$.

Joining with \eqref{to-boundary} and assuming once again that
\begin{equation}\label{hsuppose2}
  \begin{split}
0<h'(t)<\frac{1}{2M},\ \  \forall \, t\in [-M,\ M],
  \end{split}
\end{equation}
we then derive
\begin{equation}\label{phix0-over}
  \begin{split}
\phi ({x_0})
 \ge & \frac{{{c_0}C(\tau ,\mu ){{\rm{e}}^{ - 2Mh'}}}}{{4{C_1}}}(\mathop {\sup }\limits_\Omega  |Du{|^2} - {C_1})\\
  \ge &  \frac{{{c_0}C(\tau ,\mu ){{\rm{e}}^{ - 2Mh'}}}}{{8{C_1}}}\mathop {\sup }\limits_\Omega  |Du{|^2}\\
 \ge &  \frac{{{c_0}C(\tau ,\mu ){{\rm{e}}^{ - 2Mh'}}}}{{8{C_1}}}|Du{|^2}({x_0})\\
 \ge &  \frac{{{c_0}C(\tau ,\mu )}}{{9{C_1}\rm{e}}}|Dw{|^2}({x_0})\triangleq C_0 |Dw{|^2}({x_0}) .
  \end{split}
\end{equation}
Without loss of generality, we can assume that $C_0\in (0,\ 1).$

At $x_0$, we also follow \cite{DM} to choose the coordinate such that $(u_{ij})$ is diagonal.

For $k=1,\,2,\cdots,n$, denote by ${T_k} = \sum\limits_{l = 1}^n {{C^{kl}}{w_l}} $ and $\overrightarrow{T}=(T_1,\,T_2,\cdots,T_n)$, it is obvious to observe that $|\overrightarrow{T}| \le |Dw|$ and
\begin{equation}\label{phiT}
  \begin{split}
\phi  = \sum\limits_{i,j = 1}^n {{C^{ij}}{w_i}{w_j}}  = \sum\limits_{j = 1}^n {{T_j}{w_j}}=\langle\overrightarrow{T},\ Dw \rangle.
  \end{split}
\end{equation}

Considering the lower bound we just derived in \eqref{phix0-over}, we get
\begin{equation}\label{TDw}
  \begin{split}
C_0 |Dw| \le {\rm{|}}T{\rm{|}} \le |Dw|.
  \end{split}
\end{equation}

Without loss of generality, we further assume by the Pigeon-Hole Principle that
\begin{equation}\label{T1W1-inequality}
  \begin{split}
{T_1}{w_1} \ge \frac{C_0 }{n}|Dw{|^2},
  \end{split}
\end{equation}
therefore,
\begin{equation}\label{T1W1-inequality-over}
  \begin{split}
\frac{{{w_1}}}{{{T_1}}} \ge \frac{C_0}{n}
  \end{split}
\end{equation}
and we can set $\mu$ is small such that
\begin{equation}\label{T1u1-inequality-over}
  \begin{split}
\frac{{{u_1}}}{{{T_1}}} \ge \frac{C_0}{3n}.
  \end{split}
\end{equation}

By a direct calculation, we have
\begin{equation}\label{wij-uij}
  \begin{split}
{w_i} =& {u_i}(1 - \frac{{{\varphi _z}d}}{{\cos \theta }}) + \varphi_i(\frac{d}{\cos\theta})+ \varphi(\frac{d}{\cos\theta})_i\,;\\
{w_{ij}} =& {u_{ij}}(1 - \frac{{{\varphi _z}d}}{{\cos \theta }}) - \frac{{{\varphi _{zz}}d}}{{\cos \theta }}{u_i}{u_j} -\varphi_{zj}u_i(\frac{d}{\cos\theta})- {\varphi _z}{u_i}{(\frac{d}{{\cos \theta }})_j} + {\varphi _z}{u_j}{(\frac{d}{{\cos \theta }})_i}\\
  &+ \frac{d}{{\cos \theta }}{\varphi _{ij}} +\varphi_{zi}u_j(\frac{d}{\cos\theta})+ {(\frac{d}{{\cos \theta }})_i}{\varphi _j} + {(\frac{d}{{\cos \theta }})_j}{\varphi _i} + {(\frac{d}{{\cos \theta }})_{ij}}\varphi\,.
  \end{split}
\end{equation}

By the assumption that $x_0$ is the maximum point, we then have $\Phi_i =0$ for $i=1,\,2,\,\cdots,\,n$, it follows that
\begin{equation}\label{1-ordercondition}
  \begin{split}
\frac{\phi_i}{\phi} + h'{u_i}+\tau {d_i}= 0\,,
  \end{split}
\end{equation}
especially for $i=1$,
\begin{equation}\label{Phii0}
  \begin{split}
\sum\limits_{l = 1}^n {{T_l}{w_{l1}}}  =  - \frac{\phi }{2}(h'{u_1} + \tau {d_1}) - \sum\limits_{k,l = 1}^n {\frac{{{C^{kl}}_{,1}}}{2}{w_k}{w_l} }\,,
  \end{split}
\end{equation}
then by \eqref{TDw}$-$ \eqref{T1u1-inequality-over}, we have
\begin{equation}\label{u11}
  \begin{split}
   {u_{11}}(1 - \frac{{{\varphi _z}d}}{{\cos \theta }}) \le - \frac{{{u_1}}}{{2{T_1}}}h'\phi + Cd|Dw{|^2} + C|Dw|\,.
  \end{split}
\end{equation}
If we assume that $h'$ has a positive lower bound and $|Dw|$ is large enough, and $\mu$ is small enough, then we can get
\begin{equation}\label{u11-over}
  \begin{split}
   {u_{11}}<0\,,
  \end{split}
\end{equation}
thus
\begin{equation}\label{F11}
  \begin{split}
{F^{11}} \ge {F^{kk}} \ge C\left( {n,k} \right)F\,.
  \end{split}
\end{equation}

Now, it is turn for us to deal with the second order derivatives of $\Phi$. With the help of the first order condition \eqref{1-ordercondition}, it follows that
\begin{equation}\label{Phi-ij}
  \begin{split}
{\Phi _{ij}} = &\frac{{{{\left( {\sum\limits_{k,l = 1}^n {{C^{kl}}{w_k}{w_l}} } \right)}_{ij}}}}{\phi } - (h'{u_i} + \tau {d_i})(h'{u_j} + \tau {d_j}) + h'{u_{ij}} + h''{u_i}{u_j} + \tau {d_{ij}}\\
 = & \frac{{{{\left( {\sum\limits_{k,l = 1}^n {{C^{kl}}{w_k}{w_l}} } \right)}_{ij}}}}{\phi } - h'{u_i}{d_j} - h'{u_j}{d_i} - {\tau ^2}{d_i}{d_j} + h'{u_{ij}} + [h'' - {(h')^2}]{u_i}{u_j} + \tau {d_{ij}}.
  \end{split}
\end{equation}
Hence, we have at $x_0$ that
\begin{equation}\label{FijPhiij}
  \begin{split}
0\ge {F^{ij}}{\Phi _{ij}} = & \frac{{{F^{ij}}{{\left( {\sum\limits_{k,l = 1}^n {{C^{kl}}{w_k}{w_l}} } \right)}_{ij}}}}{\phi } - 2h'\sum\limits_{i,j = 1}^n {{F^{ij}}{u_i}{d_j}}  - {\tau ^2}\sum\limits_{i,j = 1}^n {{F^{ij}}{d_i}{d_j}}  + h'kf\\
 &+ [h'' - {(h')^2}]\sum\limits_{i,j = 1}^n {{F^{ij}}{u_i}{u_j}}  + \tau \sum\limits_{i,j = 1}^n {{F^{ij}}{d_{ij}}} \\
 \ge& \sum\limits_{i,j = 1}^n {\frac{{{F^{ij}}{{\left( {{C^{kl}}{w_k}{w_l}} \right)}_{ij}}}}{\phi }}  - ({\tau ^2} + 1)\sum\limits_{i,j = 1}^n {{F^{ij}}{d_i}{d_j}}  + h'kf \\
 &+ [h'' - 2{(h')^2}]\sum\limits_{i,j = 1}^n {{F^{ij}}{u_i}{u_j}}  + \tau \sum\limits_{i,j = 1}^n {{F^{ij}}{d_{ij}}} \\
 =&  I + II + III + IV + V.
  \end{split}
\end{equation}

It is a simple and direct calculation to deal with the last four terms. According to \eqref{TDw}$-$ \eqref{T1u1-inequality-over} and \eqref{F11}, we have
\begin{equation}\label{2-4}
  \begin{split}
&II =  - ({\tau ^2} + 1){F^{ij}}{d_i}{d_j} \ge  - ({\tau ^2} + 1)F\,,\\
&III = h'kf \ge 0\,,\\
&IV= [h'' - 2{(h')^2}]\sum\limits_{i,j = 1}^n {{F^{ij}}{u_i}{u_j}}  \ge [h'' - 2{(h')^2}]{F^{11}}{u_1}^2  \ge C_2[h'' - 2{(h')^2}]{\rm{|}}Dw{{\rm{|}}^2}F\,,\\
&V = \tau {F^{ij}}{d_{ij}} \ge  - {k_0}\tau \sum\limits_{i = 1}^n {{F^{ii}}}  =  - {k_0}\tau F\,.
  \end{split}
\end{equation}
where $k_0$ is a positive constant related to the geometry of $\partial\Omega$.

To deal with the term $I$, we have
\begin{equation}\label{bI}
  \begin{split}
I =& \frac{{\sum\limits_{i,j,k,l = 1}^n {{F^{ij}}{C^{kl}}_{,ij}{w_k}{w_l}} }}{\phi }+ \frac{{2\sum\limits_{i,j,k,l = 1}^n {{F^{ij}}{C^{kl}}{w_{ijk}}{w_l}} }}{\phi }\\
 & + \frac{{4\sum\limits_{i,j,k,l = 1}^n {{F^{ij}}{C^{kl}}_{,j}{w_{ik}}{w_l}} }}{\phi }+ \frac{2{\sum\limits_{i,j,k,l = 1}^n {{F^{ij}}{C^{kl}}{w_{ik}}{w_{jl}}} }}{\phi }\\
 =& {I_1} + {I_2} + {I_3} + {I_4}.
  \end{split}
\end{equation}
 We consider these four terms one by one in the following text.

 For the term $I_1$, it is easy to deduce that
 \begin{equation}\label{I1}
  \begin{split}
{I_1} = \frac{{\sum\limits_{i,j,k,l = 1}^n {{F^{ij}}{C^{kl}}_{,ij}{w_k}{w_l}} }}{\phi } \ge  - CF\,.
  \end{split}
\end{equation}

For the term $I_2$, we need a subtle operation as follows.
\begin{equation}\label{I2}
  \begin{split}
\phi {I_2} = & 2\sum\limits_{i,j,k,l = 1}^n {{F^{ij}}{C^{kl}}{w_{ijl}}{w_k}}  = 2\sum\limits_{i,j,l = 1}^n {{F^{ij}}{T_l}{{(u - \frac{{\varphi (x,u)d}}{{\cos \theta }})_{ijl}}}} \\
 = & 2\sum\limits_{i,j,l = 1}^n {{T_l}\left[ {{F^{ij}}{u_{ijl}} + {F^{ij}}{{(\frac{{\varphi (x,u)d}}{{\cos \theta }})_{ijl}}}} \right]}\\
  = & 2\sum\limits_{i,j,l = 1}^n {{T_l}\left[ {f_l + {F^{ij}}{{(\frac{{\varphi (x,u)d}}{{\cos \theta }})_{ijl}}}} \right]}\,.
  \end{split}
\end{equation}
To proceed, we should compute $(\frac{{\varphi (x,u)d}}{{\cos \theta }})_{ijl}$\,. By a direct calculation,
\begin{equation}\label{ijl}
  \begin{split}
(\frac{{\varphi (x,u)d}}{{\cos \theta }})_{ijl}=& {(\varphi )_{ijl}}(\frac{d}{{\cos \theta }}) + {(\varphi )_{ij}}{(\frac{d}{{\cos \theta }})_l} + {(\varphi )_{il}}{(\frac{d}{{\cos \theta }})_j} + {(\varphi )_{jl}}{(\frac{d}{{\cos \theta }})_i} \\
&+ {(\varphi )_i}{(\frac{d}{{\cos \theta }})_{jl}} + {(\varphi )_j}{(\frac{d}{{\cos \theta }})_{il}} + {(\varphi )_l}{(\frac{d}{{\cos \theta }})_{ij}} + \varphi {(\frac{d}{{\cos \theta }})_{ijl}}\,,
  \end{split}
\end{equation}
where
\begin{equation}\label{varphi-derivative}
  \begin{split}
{(\varphi )_i} =& {\varphi _i} + {\varphi _z}{u_i}\,,\\
{(\varphi )_{ij}} =& {\left( {{\varphi _i} + {\varphi _z}{u_i}} \right)_j} = {\varphi _{ij}} + {\varphi _{iz}}{u_j} + {\varphi _{zj}}{u_i} + {\varphi _{zz}}{u_i}{u_j} + {\varphi _z}{u_{ij}}\,,\\
{(\varphi )_{ijl}} =& {\left( {{\varphi _{ij}} + {\varphi _{iz}}{u_j} + {\varphi _{zj}}{u_i} + {\varphi _{zz}}{u_i}{u_j} + {\varphi _z}{u_{ij}}} \right)_l}\\
=&{\varphi _{ijl}} + {\varphi _{ijz}}{u_l} + {\varphi _{izl}}{u_j} + {\varphi _{izz}}{u_j}{u_l} + {\varphi _{iz}}{u_{lj}}\\
 &+ {\varphi _{zjl}}{u_i} + {\varphi _{zjz}}{u_i}{u_l} + {\varphi _{zj}}{u_{il}}\\
 &+ {\varphi _{zzl}}{u_i}{u_j} + {\varphi _{zzz}}{u_i}{u_j}{u_l} + {\varphi _{zz}}{u_{li}}{u_j} + {\varphi _{zz}}{u_i}{u_{lj}}\\
 &+ {\varphi _{zl}}{u_{ij}} + {\varphi _{zz}}{u_l}{u_{ij}} + {\varphi _z}{u_{ijl}}\,.
  \end{split}
\end{equation}
Note that
$$\sum\limits_{i,j = 1}^n {{F^{ij}}{u_{ij}}}  = kf,\ \sum\limits_{i,j = 1}^n {{F^{ij}}{u_{ijl}} }  =D_lf, \  \sum\limits_{j = 1}^n {{F^{ij}}{u_{lj}}}  = {F^{ii}}{u_{ii}}\ (\mbox{fixed}\ i),\  0 < \sum\limits_{j = 1}^n {{F^{ij}}{u_i}{u_j}}  \le |Du{|^2}F,$$
therefore we have
\begin{equation}\label{I2-over}
  \begin{split}
\phi {I_2} \ge  - Cd|Dw{|^4}F - C|Dw{|^3}F - Cd|Dw{|^2}\sum\limits_{i = 1}^n {|{F^{ii}}{u_{ii}}} | - C{\rm{|}}Dw{\rm{|}}\sum\limits_{i = 1}^n {|{F^{ii}}{u_{ii}}} |- C|Dw{|^2}\,.
  \end{split}
\end{equation}

Almost the same procedure, we can settle the remained two terms.
\begin{equation}\label{I3}
  \begin{split}
\phi {I_3} =  &4\sum\limits_{i,j,p,l = 1}^n {{F^{ij}}{C^{pl}}_{,j}{w_{ip}}{w_l}}  \ge  2\sum\limits_{i,l = 1}^n {{w_l}{C^{il}}_{,i}{F^{ii}}{u_{ii}}}  -C |Dw{|^3}F\\
 \ge &  - C|Dw{|^3}F - C{\rm{|}}Dw{\rm{|}}\sum\limits_{i = 1}^n {|{F^{ii}}{u_{ii}}} |\,,
  \end{split}
\end{equation}
and
\begin{equation}\label{I4}
  \begin{split}
\phi {I_4} = &2 \sum\limits_{i,j,p,l = 1}^n {{F^{ij}}{C^{pl}}{w_{ip}}{w_{jl}}} \\
\ge& 2 \sum\limits_{i = 1}^n {{F^{ii}}{C^{ii}}{u_{ii}}^2}  - Cd|Dw{|^2}\sum\limits_{i = 1}^n {|{F^{ii}}{u_{ii}}} | - Cd|Dw{|^4}F - C|Dw{|^3}F\,.
  \end{split}
\end{equation}

Taking into account \eqref{I1}, \eqref{I2-over}, \eqref{I3} and \eqref{I4}, we can get
\begin{equation}\label{I-over}
  \begin{split}
\phi I\ge & 2\sum\limits_{i = 1}^n {{F^{ii}}{C^{ii}}{u_{ii}}^2}  - Cd|Dw{|^2}\sum\limits_{i = 1}^n {|{F^{ii}}{u_{ii}}} |- C{\rm{|}}Dw{\rm{|}}\sum\limits_{i = 1}^n {|{F^{ii}}{u_{ii}}} |\\
& - Cd|Dw{|^4}F - C|Dw{|^3}F- C|Dw{|^2}\,.
  \end{split}
\end{equation}

Denoting by
\begin{equation}\label{H}
  \begin{split}
H=2\sum\limits_{i = 1}^n {{F^{ii}}{C^{ii}}{u_{ii}}^2}  - Cd|Dw{|^2}\sum\limits_{i = 1}^n {|{F^{ii}}{u_{ii}}} |- C{\rm{|}}Dw{\rm{|}}\sum\limits_{i = 1}^n {|{F^{ii}}{u_{ii}}} |\,,
  \end{split}
\end{equation}
and we will bound $H$ from below in the following.

Plugging
\begin{equation}\label{F11u11}
  \begin{split}
{F^{11}}{u_{11}} = kf - \sum\limits_{\alpha  = 2}^n {{F^{\alpha \alpha }}{u_{\alpha \alpha }}}
  \end{split}
\end{equation}
into \eqref{H}, we then have
\begin{equation}\label{H1}
  \begin{split}
H=&2\sum\limits_{\alpha = 2}^n {{F^{\alpha\alpha}}{C^{\alpha\alpha}}{u_{\alpha\alpha}}^2} - \left(Cd|Dw{|^2}+ C{\rm{|}}Dw{\rm{|}}   \right)\sum\limits_{\alpha = 2}^n {|{F^{\alpha\alpha}}{u_{\alpha\alpha}}} |\\
&+2{{F^{11}}{C^{11}}{u_{11}}^2}- \left(Cd|Dw{|^2}+ C{\rm{|}}Dw{\rm{|}}   \right){|{F^{11}}{u_{11}}} |\\
\ge & \frac{{2{C^{11}}}}{{{F^{11}}}}{\left( {kf - \sum\limits_{\alpha  = 2}^n {{F^{\alpha \alpha }}{u_{\alpha \alpha }}} } \right)^2} + 2\sum\limits_{\alpha  = 2}^n {{F^{\alpha \alpha }}{C^{\alpha \alpha }}{u_{\alpha \alpha }}^2} \\
 &- \left[ {Cd|Dw{|^2} + C|Dw|} \right]\sum\limits_{\alpha  = 2}^n {{\rm{|}}{F^{\alpha \alpha }}{u_{\alpha \alpha }}|}  - C|Dw{|^2}\\
 = &\frac{{{2C^{11}}}}{{{F^{11}}}}{\left( {\sum\limits_{\alpha  = 2}^n {{F^{\alpha \alpha }}{u_{\alpha \alpha }}} } \right)^2} + \frac{{{C^{11}}}}{{{F^{11}}}}{\left( {kf} \right)^2} - \frac{{2kf{C^{11}}}}{{{F^{11}}}}\sum\limits_{\alpha  = 2}^n {{\rm{|}}{F^{\alpha \alpha }}{u_{\alpha \alpha }}{\rm{|}}} \\
 &+ \sum\limits_{\alpha  = 2}^n {\frac{{{2C^{\alpha \alpha }}}}{{{F^{\alpha \alpha }}}}{{\left( {{F^{\alpha \alpha }}{u_{\alpha \alpha }}} \right)}^2}} - \left[ {Cd|Dw{|^2} + C|Dw|} \right]\sum\limits_{\alpha  = 2}^n {{\rm{|}}{F^{\alpha \alpha }}{u_{\alpha \alpha }}|}  - C|Dw{|^2}\,,
  \end{split}
\end{equation}
By the Newton-MacLaurin inequality stated in Proposition \ref{NewtonInequality}, it follows $F\geq C>0$,
thus joining with \eqref{F11} we then have
\begin{equation}\label{H2}
  \begin{split}
H\ge & \frac{{{2C^{11}}}}{{{F^{11}}}}{\left( {\sum\limits_{\alpha  = 2}^n {{F^{\alpha \alpha }}{u_{\alpha \alpha }}} } \right)^2} + \sum\limits_{\alpha  = 2}^n {\frac{{{2C^{\alpha \alpha }}}}{{{F^{\alpha \alpha }}}}{{\left( {{F^{\alpha \alpha }}{u_{\alpha \alpha }}} \right)}^2}} \\
 & - \left[ {Cd|Dw{|^2} + C|Dw|} \right]\sum\limits_{\alpha  = 2}^n {{\rm{|}}{F^{\alpha \alpha }}{u_{\alpha \alpha }}|}  - C|Dw{|^2}\,.
  \end{split}
\end{equation}

To reach our target, we set out to consider the following quadratic form with respect to $x_\alpha,\ \alpha=2,\,3,\,\cdots,\, n$\,,
\begin{equation}\label{QFJ}
  \begin{split}
J=\frac{{{C^{11}}}}{{{F^{11}}}}{\left( {\sum\limits_{\alpha  = 2}^n {{x_\alpha }} } \right)^2} + \sum\limits_{\alpha  = 2}^n {\frac{{{C^{\alpha \alpha }}}}{{{F^{\alpha \alpha }}}}{x_\alpha }^2}\,,
  \end{split}
\end{equation}
it is obvious to get
\begin{equation}\label{QFG1}
  \begin{split}
J \ge \frac{{{C^{11}}{{\left( {\sum\limits_{\alpha  = 2}^n {{x_\alpha }} } \right)}^2} + \sum\limits_{\alpha  = 2}^n {{C^{\alpha \alpha }}{x_\alpha }^2} }}{F} \,.
  \end{split}
\end{equation}
We then consider the simpler quadratic form
\begin{equation}\label{QFG2}
  \begin{split}
{\rm{G(}}{C^{11}},{C^{22}}, \cdots ,{C^{nn}},{x_2}, \cdots ,{x_n}{\rm{) = }}{C^{11}}{\left( {\sum\limits_{\alpha  = 2}^n {{x_\alpha }} } \right)^2} + \sum\limits_{\alpha  = 2}^n {{C^{\alpha \alpha }}{x_\alpha }^2} \,.
  \end{split}
\end{equation}

Since
\[0 \le {C^{ii}} \le 1,\  \sum\limits_{i = 1}^n {{C^{ii}}}  = n - 1\,,\]
at most one of all the $C^{ii}$'s is permitted to be equal to zero, therefore the quadratic form $G$ with fixed coefficients $C^{ii}$'s is positive definite. Now, we can consider $G$ as a $2n-1$ variables positive function and restrict $G$ on the compact domain
\[{\rm{D = \{ (}}{C^{11}},{C^{22}}, \cdots ,{C^{nn}},{x_2}, \cdots ,{x_n}{\rm{)}}{\kern 1pt} {\kern 1pt} {\kern 1pt} |\ \ 0 \le {C^{ii}} \le 1,\ 1 \le i \le n,\  \sum\limits_{i = 1}^n {{C^{ii}}}  = n - 1,\  \sum\limits_{\alpha  = 2}^n {{x_\alpha }^2} {\rm{ = }}1{\rm{\} }}.\]
It is a simple fact that the minimum value of $G$ on $\mathrm{D}$ is the least eigenvalue of the $G$, we assume it to be $\lambda_0 $ which is a fixed positive constant. Hence, we have
\begin{equation}\label{G-over}
  \begin{split}
{\rm{G}} \ge \lambda_0 \sum\limits_{\alpha  = 2}^n {{x_\alpha }^2}.
  \end{split}
\end{equation}

Thus, we have by the simple fact $ax^2+bx\ge -\frac{b^2}{4a}$ if $a>0$ that
\begin{equation}\label{H3}
  \begin{split}
H \ge & \frac{2 \lambda_0}{F} \sum\limits_{\alpha  = 2}^n (F^{\alpha\alpha}u_{\alpha\alpha})^2 - \left[ {Cd|Dw{|^2} + C|Dw|} \right]\sum\limits_{\alpha  = 2}^n {{\rm{|}}{F^{\alpha \alpha }}{u_{\alpha \alpha }}|}  - C|Dw{|^2}\\
\ge &- \frac{Cd}{\lambda _0}|Dw{|^4}F - C|Dw{|^3}F - C|Dw{|^2} \,.
  \end{split}
\end{equation}
Plugging this into \eqref{I-over} and joining with \eqref{phix0-over}, we can derive
\begin{equation}\label{I-finished}
  \begin{split}
 I \ge  - Cd|Dw{|^2}F - C|Dw|F\,.
  \end{split}
\end{equation}

Therefore, combining \eqref{2-4} and \eqref{I-finished} we can get
\begin{equation}\label{Last}
  \begin{split}
0\ge \frac{{F^{ij}}{\Phi _{ij}} }{F}\ge C_2[h'' - 2{(h')^2}]{\rm{|}}Dw{{\rm{|}}^2}-  C_3d|Dw{|^2} - C|Dw|\,,
  \end{split}
\end{equation}
where we use once again the fact $F\ge C>0$.

Now, we set
\begin{equation}\label{Ht}
  \begin{split}
h(t) = \frac{1}{4}\ln \frac{1}{{(3M - t)}}\,,
  \end{split}
\end{equation}
it satisfies all the assumptions we have made in advance. Let $\mu$ be small enough so that $C_3\mu \le C_2(h')^2$, we then get
\begin{equation}\label{Last-over}
  \begin{split}
 C_2{\left( {\frac{1}{{16{M}}}} \right)^2}{\rm{|}}Dw{{\rm{|}}^2} - C|Dw|\le 0,
  \end{split}
\end{equation}
this will lead to the universal bound of $|Dw|$ at $x_0$ and we then get the global gradient estimate of $u$ on $\overline{\Omega}$ by a standard discussion and this finishes the whole proof of Theorem \ref{OBB}.

\end{proof}

{\bf Acknowledgments:} The research belongs to the project ZR2020MA018 supported by Shandong  Provincial Natural Science Foundation. The author would like to owe thanks to Prof. X. Ma for his constant encouragement and useful discussion on this topic.

\end{document}